\newtheorem{theorem}{Theorem}
\newtheorem{corollary}[theorem]{Corollary}
\newtheorem{lemma}[theorem]{Lemma}
\newtheorem{proposition}[theorem]{Proposition}
\theoremstyle{definition}
\newtheorem*{Example*}{Example}
\newtheorem{remark}[theorem]{Remark}
\newtheorem*{question*}{Question}
\theoremstyle{remark}
\newtheorem*{Remark_small*}{Remark}
\numberwithin{equation}{section}
\newcommand{\R}{\mathbb{R}} 
\newcommand{\N}{\mathbb{N}} 
\newcommand{\Hy}{\mathbb{H}}
\newcommand{\BB}{\mathbb{B}}
\renewcommand{\P}{\mathbb{P}}
\newcommand{\V}{\mathbb{V}}
\newcommand{\JJ}{J}
\DeclareMathOperator{\dint}{d\!}
\DeclareMathOperator{\vol}{vol}
\DeclareMathOperator{\Beta}{Beta}
\let\@fnsymbol\@alph
\begin{document}
\title{\textbf{High-dimensional limits arising from \\ hyperbolic Poisson $k$-plane processes}}
\author{Tillmann B\"uhler\footnotemark[1],\;\; Daniel Hug\footnotemark[2],\;\; Christoph Th\"ale\footnotemark[3]}
\date{}
\maketitle
\renewcommand{\thefootnote}{\fnsymbol{footnote}}
\footnotetext[1]{Karlsruhe Institute of Technology, Germany. Email: tillmann.buehler@kit.edu}
\footnotetext[2]{Karlsruhe Institute of Technology, Germany. Email: daniel.hug@kit.edu}
\footnotetext[3]{Ruhr University Bochum, Germany. Email: christoph.thaele@rub.de}

\vspace{-1cm}

\begin{abstract}
\noindent
We consider a stationary Poisson process of $k$-planes in the $d$-dimensional hyperbolic space $\mathbb H^d$ of constant curvature $-1$, with $d \ge 4$ and $1 \le k \le d-1$. 
It is known that, after centring and normalization, the total $k$-volume of all intersections of $k$-planes with a geodesic ball of radius $R$ converges in distribution, as $R \to \infty$, to a non-Gaussian infinitely divisible random variable $Z_{d,k}$ whenever $2k > d+1$. 
We investigate the distributional behaviour of $Z_{d,k}$ in the high-dimensional regime $d \to \infty$ and depending on how fast $k$ grows in relation to $d$. 
We derive precise conditions for the variance normalized sequence to converge in law to a standard Gaussian random variable or to a degenerate law, respectively, and show that an alternative rescaling of the Lévy measures yields an explicit non-Gaussian infinitely divisible limit for fixed codimension $d-k$ and a standard Gaussian limit for $d-k \to \infty$.
\\

\noindent \textbf{Keywords:} Hyperbolic stochastic geometry, infinitely divisible distribution, Poisson plane process, Poisson hyperplane process, stochastic geometry\\
\textbf{MSC:} 60D05, 60F05
\end{abstract}

\section{Introduction}

In stochastic geometry, a central theme is the analysis of geometric structures generated by random point processes on spaces of geometric objects such as balls, general convex bodies or subspaces. Of particular interest are their geometric characteristics, including volumes, surface areas, and more general intersection measures. While much of the classical theory has been developed in the Euclidean setting, see \cite{SW08}, there has recently been growing interest in extending these questions to spaces of constant curvature. In particular, in the hyperbolic space $\Hy^d$, with its constant negative curvature and exponential volume growth, such random geometric structures often exhibit an interesting probabilistic behaviour that differs significantly from that in Euclidean space.  

One class of models that has received considerable attention is that of {Poisson processes of $k$-planes} in $\Hy^d$, where a $k$-plane is a totally geodesic submanifold of dimension $k\in\{1,\ldots,d-1\}$, see \cite{BHT23,BH25,HHT21,KRT25} and the final chapter of \cite{HS24}. To be formal, let $\eta_{d,k}$ be an isometry invariant Poisson process on the space of $k$-planes in $\Hy^d$. Let $\BB^d_R$ denote a geodesic ball of radius $R>0$ centred at a fixed point in $\Hy^d$, and define  
\[
F_{d,k,R} := \sum_{E\in\eta_{d,k}} \vol_k(E\cap\BB_R^d),
\]
where $\vol_k(\,\cdot\,)$ denotes the $k$-dimensional Riemannian volume in $E$. In other words, $F_{d,k,R}$ measures the total $k$-dimensional volume contributed by all $k$-planes of $\eta_{d,k}$ inside of $\BB_R^d$.  

In the Euclidean setting, for all choices of dimensions $d\geq 2$ and $k\in\{1,\ldots,d-1\}$ the random variables $F_{d,k,R}$ satisfy a central limit theorem as $R\to\infty$ under centring and variance normalization. In hyperbolic space, however, this asymptotic normality fails whenever $2k>d+1$, see \cite{HHT21} for the special case $k=d-1$ and \cite{BHT23} for general $k$. In this dimension regime, it has been shown that  $F_{d,k,R}$ converges in distribution, after centring and normalization, to a centred {non-Gaussian infinitely divisible} random variable $Z_{d,k}$.
The characteristic function of $Z_{d,k}$ is given by
\begin{equation}\label{eq:char_fct_Z}
    \varphi_{Z_{d,k}}(t) = \exp\Big( \int_\R (e^{itx}-1-itx) \,\nu_{d,k}(\dint x) \Big), \quad t \in \R,
\end{equation}
with Lévy measure (the terms \emph{Lévy measure} and \emph{Lévy triplet} are defined in Section \ref{sec:Background})
\begin{equation}\label{eq:levy_measure_Z}
	\nu_{d,k}(\dint x)
	=\frac{\omega_{d-k}}{k-1}\;x^{-1-\frac{d-1}{k-1}}
	\bigl(1-x^{\frac{2}{k-1}}\bigr)^{\frac{d-k}{2}-1}\,
	\mathbf 1_{(0,1)}(x)\,\dint x,
\end{equation}
where $\omega_b=2\pi^{b/2}/\Gamma(b/2)$ for $b\in\N$ is the surface area of a Euclidean sphere in $\R^b$.
Note that the variance $\sigma_{d,k}^2 := \V(Z_{d,k}) = \int_\R x^2\,\nu_{d,k}(\dint x)$ is finite but not equal to $1$ in general, see \eqref{eq:sigma} below.
The above normalization was chosen in \cite{BHT23} to obtain a  representation with $\nu_{d,k}$ concentrated in $(0,1)$.

The same limit law also arises in the study of more general intersection measures for hyperbolic Poisson $k$-plane processes, as demonstrated in \cite{BH25}. Motivated by simulations in \cite[Section 6]{BH25}, it has been asked whether (or not), in the high-dimensional limit $d\to\infty$ (with $2k>d+1$), the variance normalized random variables $Z_{d,k}$ converge to a standard Gaussian one. Surprisingly, the cumulant analysis in \cite{BH25} reveals a highly unconventional behaviour. After normalization, the higher-order cumulants of \( Z_{d,k} \) do not merely fail to vanish, they even diverge as \( d \to \infty \). This stands in stark contrast to the standard central limit setting, where vanishing higher cumulants are the hallmark of Gaussian convergence. The result therefore suggests that \( Z_{d,k} \) should exhibit a non-Gaussian limit, if any. Remarkably, however, we will see that this intuition is misleading. Despite the divergence of its cumulants, \( Z_{d,k} \) can in fact converge to a Gaussian law. Our first theorem identifies the precise conditions under which this counter-intuitive phenomenon occurs.

\begin{theorem}\label{thm:CLT}
    Let $(d_n,k_n)_{n\geq 1}$ be a sequence satisfying $k_n < d_n$, $2k_n > d_n+1$ and $d_n \to \infty$ as $n\to\infty$.
    Write $Z^*_n \coloneqq Z_{d_n,k_n}/\sigma_{d_n,k_n}$ for the variance normalized version of $Z_{d_n,k_n}$ and abbreviate $r_n \coloneqq 2k_n - d_n - 1$.
    \begin{enumerate}
        \item[\emph{(a)}] If $k_n/d_n \to 1/2$ and $\limsup\limits_{n\to \infty} d_n^{-1} \, r_n^{d_n/k_n} < e\pi$, then $Z^*_n$ converges in distribution to a standard Gaussian law.
         \item[\emph{(b)}] If  $k_n/d_n \to 1/2$ and $\liminf\limits_{n\to \infty} d_n^{-1} \, r_n^{d_n/k_n} > e\pi$ or if $\liminf\limits_{n \to \infty}k_n/d_n > 1/2$, then $Z^*_n$ converges in distribution to $0$.
    \end{enumerate}
\end{theorem}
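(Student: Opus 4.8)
The plan is to convert the statement into one about weak convergence of a family of probability measures extracted from the Lévy measures, then into a tail estimate for a Beta-distributed random variable, and finally to read off the constant $e\pi$ from a Stirling expansion.

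\medskip
\noindent\textbf{Step 1: reduction to the Lévy data.}
Since $\nu_{d,k}$ is supported in $(0,1)$ and $\int x^2\,\nu_{d,k}(\dint x)=\sigma_{d,k}^2<\infty$, the variable $Z_{d,k}$ is infinitely divisible with Lévy triplet $(0,0,\nu_{d,k})$ (identity truncation), and $Z^*_n$ is infinitely divisible with Lévy measure $\nu_n^*:=(\sigma_{d_n,k_n}^{-1}\,\cdot\,)_*\nu_{d_n,k_n}$ satisfying $\int x^2\,\nu_n^*(\dint x)=1$. I would introduce the probability measure $\mu_n(\dint x):=x^2\,\nu_n^*(\dint x)$ and the function $g_t(x):=(e^{itx}-1-itx)/x^2$, which extends to a bounded continuous function on the one-point compactification $[0,\infty]$ with $g_t(0)=-t^2/2$ and $g_t(\infty)=0$; then $\log\varphi_{Z^*_n}(t)=\int_{[0,\infty]}g_t\,\dint\mu_n$. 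By the Portmanteau theorem and Lévy's continuity theorem it follows at once that $\mu_n\Rightarrow\delta_0$ implies $Z^*_n\to N(0,1)$ in distribution, while $\mu_n\Rightarrow\delta_\infty$ implies $Z^*_n\to 0$. So it suffices to decide which point mass $\mu_n$ approaches.

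\medskip
\noindent\textbf{Step 2: identifying $\mu_n$ and reducing to a Beta tail.}
The substitution $u=x^{2/(k-1)}$ applied to the density \eqref{eq:levy_measure_Z} transforms $\mu_n$ into the law of $\sigma_{d_n,k_n}^{-1}\,U_n^{(k_n-1)/2}$, where $U_n$ is $\Beta\!\big(\tfrac{r_n}{2},\tfrac{d_n-k_n}{2}\big)$-distributed; the same substitution applied to $\int x^2\,\nu_{d,k}$ gives the closed form $\sigma_{d,k}^2=\tfrac{\omega_{d-k}}{2}\,B\!\big(\tfrac{r}{2},\tfrac{d-k}{2}\big)=\pi^{(d-k)/2}\,\Gamma(r/2)/\Gamma((k-1)/2)$ (using $\tfrac r2+\tfrac{d-k}{2}=\tfrac{k-1}{2}$). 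Consequently, writing $t_n(\epsilon):=(\epsilon\,\sigma_{d_n,k_n})^{2/(k_n-1)}$, the condition $\mu_n\Rightarrow\delta_0$ is equivalent to $\P(U_n>t_n(\epsilon))\to 0$ for every $\epsilon>0$, and $\mu_n\Rightarrow\delta_\infty$ to $\P(U_n\le t_n(\epsilon))\to 0$ for every $\epsilon>0$. Since $t_n(\epsilon)=t_n(1)(1+o(1))$ for fixed $\epsilon$, everything comes down to comparing the threshold $t_n:=\sigma_{d_n,k_n}^{2/(k_n-1)}$ with the mean $m_n:=\E U_n=r_n/(k_n-1)$, keeping in mind the elementary bounds $m_n<1$ and $\V U_n/m_n^2\le 2/r_n$ (so $U_n$ concentrates around $m_n$ once $r_n\to\infty$).

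\medskip
\noindent\textbf{Step 3: the Stirling expansion and the constant $e\pi$.}
The heart of the argument is to expand $\log(t_n/m_n)$ using Stirling's formula $\log\Gamma(z)=z\log z-z-\tfrac12\log z+\tfrac12\log(2\pi)+O(1/z)$ on $\Gamma((k_n-1)/2)$ and $\Gamma(r_n/2)$. After collecting terms, the $\log k_n$ and $\log r_n$ contributions cancel against each other upon using the identity $r_n-2(k_n-1)=-(d_n-1)$, and one obtains, in the regime $k_n/d_n\to 1/2$,
\[
\log\frac{t_n}{m_n}=\frac12\,\log\frac{\pi e}{\,d_n^{-1}r_n^{d_n/k_n}\,}+o(1).
\]
(When $r_n$ stays bounded one simply has $d_n^{-1}r_n^{d_n/k_n}\to 0$ and $t_n/m_n\to\infty$; when $\liminf k_n/d_n>1/2$ one needs a separate Stirling estimate, which shows $t_n/m_n\to 0$, or, in the borderline subcase $k_n/d_n\to 1$ where $m_n,t_n\to 1$, a second-order expansion giving $1-t_n\gg 1-m_n$ and hence $m_n-t_n\gg\sqrt{\V U_n}$.) This is precisely where the threshold $e\pi$ is born.

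\medskip
\noindent\textbf{Step 4: assembling the cases, and the main obstacle.}
In case (a), $\limsup_n d_n^{-1}r_n^{d_n/k_n}<e\pi$ forces $\liminf_n\log(t_n/m_n)>0$, hence $t_n\ge(1+\delta)m_n$ eventually; then $\P(U_n>t_n)\to 0$ by concentration if $r_n\to\infty$, and by Markov's inequality $\P(U_n>t_n)\le m_n/t_n\to 0$ along any subsequence where $r_n$ stays bounded (there $t_n/m_n\to\infty$), so $Z^*_n\to N(0,1)$. In case (b), the hypothesis $\liminf_n d_n^{-1}r_n^{d_n/k_n}>e\pi$ under $k_n/d_n\to 1/2$ forces $r_n\to\infty$ and $t_n\le(1-\delta)m_n$, so $\P(U_n\le t_n)\to 0$ by concentration; under $\liminf k_n/d_n>1/2$ one has $\liminf m_n>0$ and $r_n\to\infty$, and Step 3 again yields $m_n-t_n\gg\sqrt{\V U_n}$, so $\P(U_n\le t_n)\to 0$; either way $Z^*_n\to 0$. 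The main obstacle is Step 3: carrying the Stirling expansion to exactly the order at which the competing $\pi^{(d-k)/2}$, $\Gamma((k-1)/2)$, $\Gamma(r/2)$ and $m_n$ contributions balance, so that the decisive comparison happens precisely at the scale $d_n^{-1}r_n^{d_n/k_n}\asymp e\pi$ — and, at the same delicate scale, equipping the Beta variable $U_n$ with concentration/tail bounds sharp enough to push $\P(U_n\gtrless t_n)$ to $0$ on the correct side. All of the surprising phenomenon (a Gaussian limit in the face of diverging cumulants) is encoded in those cancellations.
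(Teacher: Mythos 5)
Your proposal is correct and follows essentially the same route as the paper: reduce to the truncated second-moment criterion for the rescaled Lévy measures (your compactified Portmanteau formulation of Step 1 is just a repackaging of Lemma \ref{lem:condition_CLT_alternative}), rewrite the tail mass as a $\Beta(\tfrac{r_n}{2},\tfrac{d_n-k_n}{2})$ tail via $u=x^{2/(k-1)}$ exactly as in \eqref{eq:J_identity_1}--\eqref{eq:J_identity_2}, control it by Chebyshev-type concentration (Lemmas \ref{lem:chebyshef}, \ref{lem:consequences1}), and extract the constant $e\pi$ from Stirling --- your expansion of $\log(t_n/m_n)$ is precisely the paper's asymptotics \eqref{eq:later}. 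The only deviations (Markov instead of Chebyshev when $r_n$ stays bounded, and treating the case $\liminf k_n/d_n>1/2$ with the same Beta variable rather than the reflected one as in Proposition \ref{prop:k/d>1/2}) are cosmetic and work as sketched.
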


\begin{remark}
Let $\gamma>0$ and $\beta\in (0,1)$ be fixed and $\tau(d_n)\in (0,1)$ such that  $k_n=\frac{d_n}{2}+\gamma d_n^\beta+\tau(d_n)$ is an integer. In this case, we get
$$
d_n^{-1}r^{{d_n}/{k_n}}\sim 4\gamma^2 d_n^{2\beta-1}\to \begin{cases}
    0,&  \beta\in (0,1/2),\\
    (2\gamma)^{2},& \beta=1/2,\\
    \infty,&\beta\in (1/2,1),
\end{cases}
$$
where for two sequences $(a_n)_{n\geq 1}$ and $(b_n)_{n\geq 1}$ we write $a_n\sim b_n$ if $a_n/b_n\to 1$ as $n\to\infty$.
Hence, for $\beta=1/2$, if $\gamma<\frac{1}{2}\sqrt{e\pi}$, then  $Z_n^*\to \mathcal{N}(0,1)$ in distribution, and if  $\gamma>\frac{1}{2}\sqrt{e\pi}$, we get $Z_n^*\to 0$ in distribution. At the critical growth rate with $\beta=1/2$ and $2\gamma =\sqrt{e\pi}$, the asymptotic behaviour of $Z_n^*$ remains open.  
\end{remark}

Our second goal is to show that even when convergence to a normal distribution fails, a different type of scaling can still lead to a meaningful limit law.
Instead of considering $Z_{d,k}/\sigma_{d,k}$, we rescale the Lévy measure $\nu_{d,k}$ and study the random variable $\widetilde Z_{d,k} $ 
with Lévy triplet $(0,0,\nu_{d,k}/\sigma^2_{d,k})$.
Such a rescaling and associated random variables are considered when introducing infinitely divisible distributions and their Lévy triplets, see \cite[\S 17, Theorem 4]{GK54},  \cite[Theorem 7.10,  Corollary 8.3]{Sato} and Section \ref{sec:Background}.

It turns out that when the codimension $d_n-k_n = b \in \N$ is fixed, the random variables $\widetilde Z_{d_n,k_n}$ converge in distribution to some non-Gaussian limit $\widetilde Z^{(b)}$ (see Figure \ref{fig:Density} for some illustrations).
On the other hand, when $d_n-k_n \to \infty$, then $\widetilde Z_{d_n,k_n}$ converges to a standard Gaussian distribution.

In the following, we write `$\log$' for the natural logarithm with respect to base $e$.

\begin{theorem}\label{thm:rescaling_levy_measure}
Let $(d_n,k_n)_{n\geq 1}$ be a sequence satisfying $k_n < d_n$, $2k_n > d_n+1$ and $d_n \to \infty$ as $n\to\infty$.
Let $(0,0,\nu_{d_n,k_n})$ denote the Lévy triplet of $Z_{d_n,k_n}$ and let $\widetilde Z_{n}$ be a random variable with Lévy triplet $(0,0,\nu_{d_n,k_n}/\sigma^2_{d_n,k_n})$.
\begin{enumerate}[label=\emph{(\alph*)}]
    \item If the codimension $d_n-k_n = b \in \N$ is constant, then $\widetilde Z_n$ converges in distribution to an infinitely divisible random variable $\widetilde Z^{(b)}$ with Lévy triplet $(0,0,\widetilde\nu^{(b)})$, where
    \begin{equation*}
        \widetilde\nu^{(b)}(\dint x) = \Gamma({\textstyle\frac{b}{2}})^{-1} x^{-2} \bigl(-\log x\bigr)^{\frac{b-2}{2}}\;\mathbf 1_{(0,1)}(x)\,\dint x.
    \end{equation*}
    In particular, $\widetilde Z^{(b)}$ has finite moments of all orders and $\V(\widetilde Z^{(b)}) = 1$.
    \item If $d_n-k_n \to \infty$, then $\widetilde Z_n$ converges in distribution to a standard Gaussian law.
\end{enumerate}
\end{theorem}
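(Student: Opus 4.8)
The plan is to reduce both parts to weak convergence of the \emph{Kolmogorov canonical measures} of the random variables $\widetilde Z_n$, and then to apply the continuity theorem for infinitely divisible distributions in its finite-variance form (see \cite[Theorem 8.7]{Sato}, cf.\ \cite{GK54}). Recall that a centred infinitely divisible law with finite variance and Lévy triplet $(0,0,\mu)$ has characteristic function $t\mapsto\exp\!\bigl(\int_\R \frac{e^{itx}-1-itx}{x^2}\,\rho(\dint x)\bigr)$, where $\rho(\dint x):=x^2\mu(\dint x)$ is a finite measure and the integrand is read as $-t^2/2$ at $x=0$; convergence in distribution of a sequence of such laws is equivalent to weak convergence of the associated finite measures $\rho_n$ on $\R$ together with convergence of the means. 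For $\widetilde Z_n$ the mean equals $0$ for every $n$, while the Kolmogorov measure is the probability measure $\rho_n(\dint x):=x^2\,\nu_{d_n,k_n}(\dint x)/\sigma^2_{d_n,k_n}$ supported on $(0,1)$. It therefore suffices to prove that $\rho_n$ converges weakly to $\rho^{(b)}(\dint x):=\Gamma(\tfrac b2)^{-1}(-\log x)^{\frac{b-2}{2}}\,\mathbf 1_{(0,1)}(x)\,\dint x$ in case (a), and to $\delta_0$ in case (b). The substitution $u=-\log x$ shows that $\rho^{(b)}$ is the law of $e^{-G}$ for a random variable $G$ with $\mathrm{Gamma}(\tfrac b2,1)$-distribution; in particular $\rho^{(b)}$ is a probability measure, $\widetilde\nu^{(b)}:=x^{-2}\rho^{(b)}$ satisfies $\int(x^2\wedge 1)\,\widetilde\nu^{(b)}(\dint x)=\rho^{(b)}(\R)=1<\infty$ and is hence a Lévy measure, $\V(\widetilde Z^{(b)})=\int x^2\,\widetilde\nu^{(b)}(\dint x)=1$, and $\widetilde Z^{(b)}$ has finite moments of all orders because $\widetilde\nu^{(b)}$ has bounded support.

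To describe $\rho_n$ explicitly, put $b_n:=d_n-k_n$ and note $1-\tfrac{d_n-1}{k_n-1}=-\tfrac{b_n}{k_n-1}$, so that
\[
x^2\,\nu_{d_n,k_n}(\dint x)=\frac{\omega_{b_n}}{k_n-1}\,x^{-\frac{b_n}{k_n-1}}\bigl(1-x^{\frac{2}{k_n-1}}\bigr)^{\frac{b_n}{2}-1}\mathbf 1_{(0,1)}(x)\,\dint x .
\]
The change of variables $t=x^{2/(k_n-1)}$ transforms the right-hand side into $\tfrac{\omega_{b_n}}{2}\,t^{\frac{k_n-b_n-1}{2}-1}(1-t)^{\frac{b_n}{2}-1}\,\dint t$ on $(0,1)$, where $\tfrac{k_n-b_n-1}{2}>0$ exactly because $2k_n>d_n+1$ (equivalently $k_n>b_n+1$). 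Hence $\sigma^2_{d_n,k_n}=\tfrac{\omega_{b_n}}{2}\,\B\!\bigl(\tfrac{k_n-b_n-1}{2},\tfrac{b_n}{2}\bigr)$ with $\B$ the Euler Beta function, and
\[
\rho_n=\mathrm{law}\bigl(T_n^{(k_n-1)/2}\bigr)=\mathrm{law}\bigl(e^{-W_n}\bigr),\qquad T_n\sim\Beta\!\bigl(\tfrac{k_n-b_n-1}{2},\tfrac{b_n}{2}\bigr),\quad W_n:=\tfrac{k_n-1}{2}\bigl(-\log T_n\bigr)\ge0 .
\]
Everything now reduces to the limiting behaviour of the nonnegative random variables $W_n$, and we observe that $k_n\to\infty$ in both regimes of the theorem.

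For part (a), $b_n=b$ is fixed, so $1-T_n\sim\Beta\!\bigl(\tfrac b2,\tfrac{k_n-b-1}{2}\bigr)$ with second parameter tending to infinity. The elementary fact that $M B_M$ converges in distribution to $\mathrm{Gamma}(a,1)$ as $M\to\infty$ when $B_M\sim\Beta(a,M)$ (a one-line density computation) yields $\tfrac{k_n-b-1}{2}(1-T_n)$ converging in distribution to $\mathrm{Gamma}(\tfrac b2,1)$, and hence also $\tfrac{k_n-1}{2}(1-T_n)$. Since $T_n\to1$ in probability and $(-\log t)/(1-t)\to1$ as $t\uparrow1$, Slutsky's lemma gives that $W_n$ converges in distribution to a variable $G\sim\mathrm{Gamma}(\tfrac b2,1)$, whence by the continuous mapping theorem $\rho_n=\mathrm{law}(e^{-W_n})$ converges weakly to $\mathrm{law}(e^{-G})=\rho^{(b)}$. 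By the continuity theorem, $\widetilde Z_n$ converges in distribution to $\widetilde Z^{(b)}$.

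For part (b), $b_n\to\infty$ and thus $k_n>b_n+1\to\infty$; it suffices to show $W_n\to\infty$ in probability, since then $\rho_n$ converges weakly to $\delta_0$. From $-\log t\ge1-t$ we obtain $W_n\ge V_n:=\tfrac{k_n-1}{2}(1-T_n)$, and since $1-T_n\sim\Beta\!\bigl(\tfrac{b_n}{2},\tfrac{k_n-b_n-1}{2}\bigr)$ a direct computation gives $\E[V_n]=\tfrac{b_n}{2}\to\infty$ and $\V(V_n)=\tfrac{b_n(k_n-1-b_n)}{2(k_n+1)}\le\tfrac{b_n}{2}$, so $\V(V_n)/\E[V_n]^2\le 2/b_n\to0$. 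Chebyshev's inequality yields $V_n/\E[V_n]\to1$ in probability, hence $W_n\ge V_n\to\infty$ in probability, $\rho_n$ converges weakly to $\delta_0$, and $\widetilde Z_n$ converges in distribution to $\mathcal N(0,1)$. The step demanding the most care is the invocation of the continuity theorem in exactly the form above --- over Kolmogorov's canonical measures, with vanishing drift throughout and with genuine weak convergence of finite measures --- so that the degenerate limit $\delta_0$ in part (b) is admissible and properly encodes the emerging Gaussian component; the remaining work is then the routine $\Beta$--Gamma asymptotics. (Alternatively, part (a) can be treated by working directly with the densities of $\rho_n$ and applying Scheffé's lemma, at the price of a mildly delicate domination near $x=0$.)
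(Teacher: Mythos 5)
Your proof is correct, but it takes a genuinely different route from the paper's. You recast both parts as weak convergence of the Kolmogorov canonical (second-moment) measures $\rho_n(\dint x)=x^2\,\nu_{d_n,k_n}(\dint x)/\sigma^2_{d_n,k_n}$, identify $\rho_n$ via the substitution $t=x^{2/(k_n-1)}$ as the law of $T_n^{(k_n-1)/2}=e^{-W_n}$ with $T_n\sim\Beta\bigl(\frac{2k_n-d_n-1}{2},\frac{d_n-k_n}{2}\bigr)$, and then run classical Beta--Gamma asymptotics: for fixed $b$ you get $W_n\Rightarrow\mathrm{Gamma}(b/2,1)$ and hence $\rho_n\Rightarrow \mathrm{law}(e^{-G})=\rho^{(b)}$, while for $d_n-k_n\to\infty$ a Chebyshev argument gives $W_n\to\infty$ in probability and $\rho_n\Rightarrow\delta_0$; the finite-variance continuity theorem (of which you only need the easy sufficiency half, since $x\mapsto(e^{itx}-1-itx)/x^2$, set to $-t^2/2$ at $x=0$, is bounded and continuous, so weak convergence of $\rho_n$ together with vanishing means immediately yields convergence of the exponents and then Lévy continuity applies) converts this into convergence in distribution, with the atom of $\delta_0$ at the origin correctly encoding the Gaussian limit in part (b). The paper instead proves (a) by pointwise convergence of the rescaled Lévy densities plus an explicit integrable domination of $|e^{itx}-1-itx|\,f_n(x)/\sigma_n^2$ (handled separately for $b\ge 2$ and $b=1$) and dominated convergence, and proves (b) through its self-contained Lemma~\ref{lem:condition_CLT_alternative} combined with the incomplete-Beta estimates of Lemmas~\ref{lem:chebyshef} and~\ref{lem:consequences1}. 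Your route buys a more conceptual identification of the limit ($\rho^{(b)}$ as the law of $e^{-G}$ with $G\sim\mathrm{Gamma}(b/2,1)$, from which $\kappa_m(\widetilde Z^{(b)})=\E[e^{-(m-2)G}]=(m-1)^{-b/2}$ and all moment claims follow at once), at the cost of importing the Gnedenko--Kolmogorov continuity theorem rather than staying self-contained; for the record, the precise reference is \cite[\S 19]{GK54} (Sato's Theorem~8.7 concerns general triplets with a truncation function), and you use only the sufficiency direction of the equivalence you state, which is exactly the direction that is valid without further uniform-integrability caveats. The computations themselves (the exponent $1-\frac{d-1}{k-1}=-\frac{d-k}{k-1}$, the Beta parameters, $\E[V_n]=b_n/2$ and $\V(V_n)\le b_n/2$) all check out.
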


\begin{figure}[t]
\centering
\includegraphics[width=0.32\columnwidth]{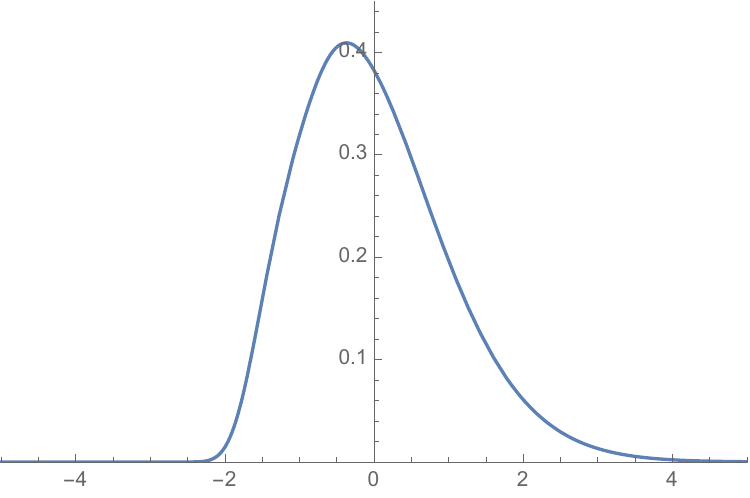}
\includegraphics[width=0.32\columnwidth]{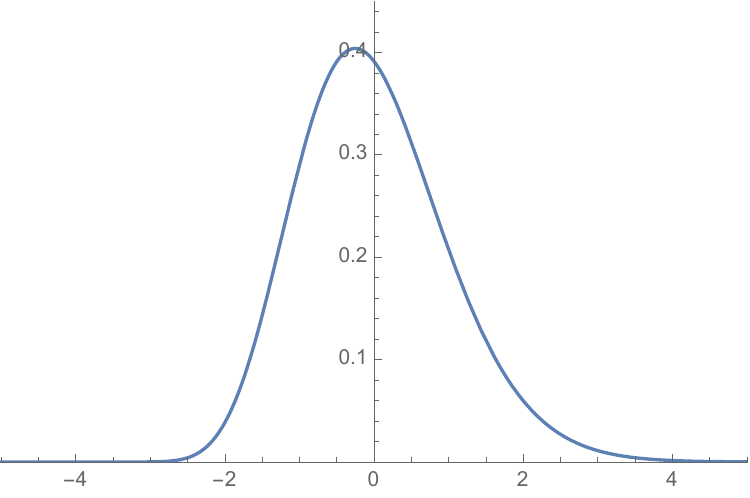}
\includegraphics[width=0.32\columnwidth]{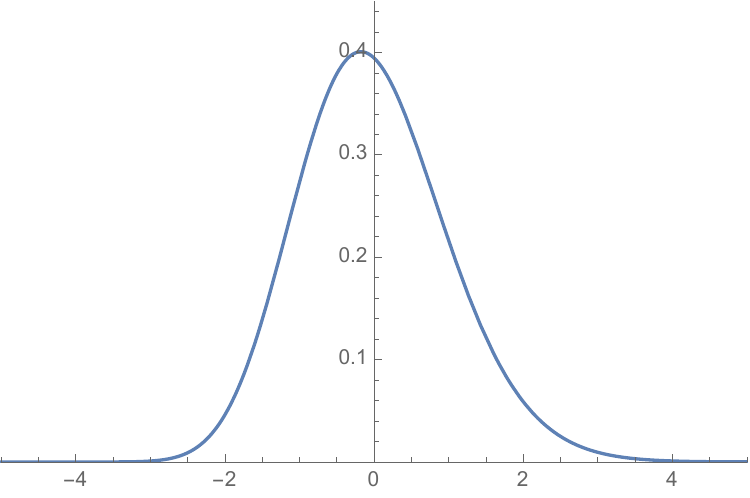}
\includegraphics[width=0.32\columnwidth]{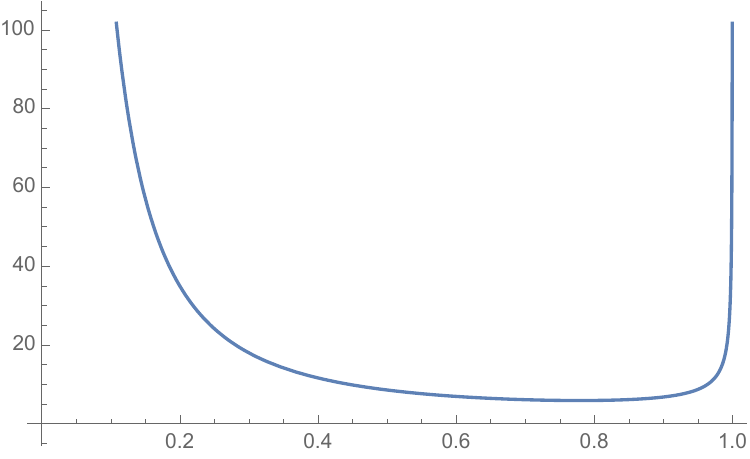}
\includegraphics[width=0.32\columnwidth]{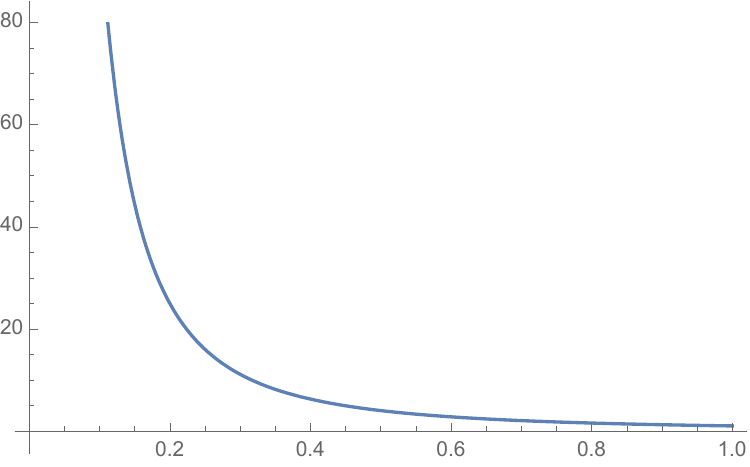}
\includegraphics[width=0.32\columnwidth]{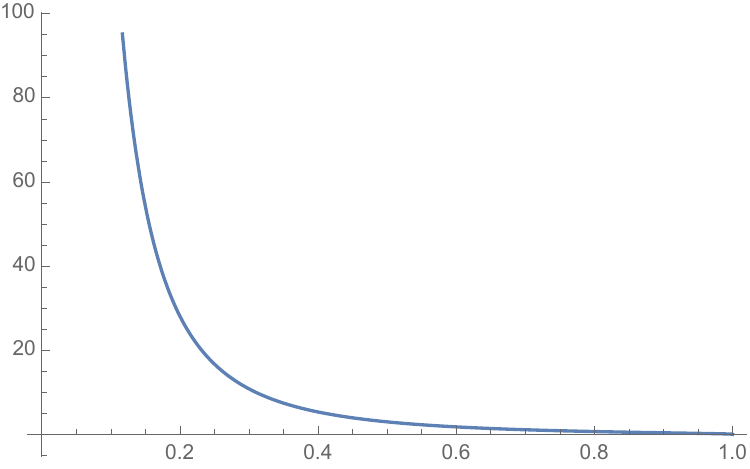}
\caption{Densities of the random variables $\widetilde Z^{(b)}$ (top line) and densities of the Lévy measure $\nu^{(b)}_*$ (bottom line) with $b=1$ (left), $b=2$ (middle) and $b=3$ (right).
The probability densities are obtained in a manner similar to that described in Section 6 of \cite{BH25}.}
\label{fig:Density}
\end{figure}

\smallskip  

The remainder of this note is organized as follows. Section~\ref{sec:Background} recalls background material on infinitely divisible laws and introduces the main convergence tool used in the proof of Theorem~\ref{thm:CLT}. In Section~\ref{sec:Beta}, we collect key inequalities and asymptotic results for the beta, incomplete beta, and regularized beta functions, which form the second main ingredient for Theorem~\ref{thm:CLT}. The proofs of Theorems~\ref{thm:CLT} and~\ref{thm:rescaling_levy_measure} are given in Section~\ref{sec:proofs}.

\section{Background on infinitely divisible laws}\label{sec:Background}

A probability distribution $\mu$ on $\mathbb{R}$ is called \emph{infinitely divisible} if, for every $n\in\N$, there exists a probability distribution $\mu_n$ such that $\mu = \mu_n^{*n}$, where $\mu_n^{*n}$ is the $n$-fold convolution of $\mu_n$ with itself. A random variable $X$ is called infinitely divisible if its distribution $\mu$ satisfies this property. The class of infinitely divisible distributions is characterized by the \emph{Lévy--Khintchine formula}. It says that the characteristic function $\varphi_X(t):=\int_{\mathbb{R}} e^{itx}\,\mu(\dint x)$ of an infinitely divisible random variable $X$ with law $\mu$ admits the unique representation
\[
\varphi_X(t) = \exp\Big( i\gamma t - \frac{1}{2}a t^2 + \int_{\R} \left( e^{itx} - 1 - itx\,\mathbf{1}_{\{|x|\le 1\}}\right) \nu(\dint x) \Big), \quad t\in\mathbb{R},
\]
see \cite[Corollary 7.6]{kallenberg}.
Here, $\gamma \in \R$ is called the drift term, $a \geq 0$ is the Gaussian variance coefficient, and $\nu$ is the \textit{Lévy measure}, satisfying $\nu(\{0\})=0$ and $\int_\R (1 \wedge x^2)\,\nu(\dint x) < \infty$.
The triplet $(\gamma,a,\nu)$, which we call the \emph{Lévy triplet} of $X$, is unique.
Convergence of infinitely divisible distributions can be described only in terms of the convergence of their associated Lévy triplets. In fact, the class of infinitely divisible distributions is closed under weak convergence, see \cite[Theorem 7.7 (i)]{kallenberg}.

The second moment of an infinitely divisible random variable $X$ with Lévy measure $\nu$ is finite if and only if $\int_\R x^2 \,\nu(\dint x) < \infty$.
In this case, the above representation can be simplified to
\begin{equation}\label{eq:Kolmogorov_rep}
    \varphi_X(t) = \exp\Big( i\tilde\gamma t - \frac{1}{2}a t^2 + \int_{\R} \left( e^{itx} - 1 - itx\right) \nu(\dint x) \Big), \quad t\in\mathbb{R},
\end{equation}
which is the so-called \emph{Kolmogorov representation}, cf.\ \cite[Equation (18.10)]{GK54} or \cite[Theorem 4.1.2]{LR79}, although the Lévy-Khintchine formula and the Kolmogorov representation are parametrized somewhat differently there.
Note that in general, $\gamma$ is not equal to $\tilde\gamma$, as one needs to adjust for the missing indicator term $\mathbf{1}_{\{|x|\le 1\}}$.

For our purposes, it suffices to treat infinitely divisible distributions whose characteristic functions are of the form \eqref{eq:Kolmogorov_rep}, with $\tilde\gamma=0$ and $a = 0$.
For these, we now state and prove a simple convergence criterion.
While the proof can be derived from a deeper result about the convergence of infinitely divisible laws (see, e.g., \cite[Theorem 7.7]{kallenberg}), we prefer to give a simple direct argument here to keep the paper self-contained. In what follows, we indicate convergence in distribution by the symbol $\xrightarrow{D}$ and write $\mathcal{N}(0,1)$ for a standard Gaussian random variable. 

\begin{lemma}\label{lem:condition_CLT_alternative}
    Let \((X_n)_{n\geq 1}\) be a sequence of random variables.
    Assume that their characteristic functions are of the form
    \[ \varphi_{X_n}(t) = \exp(\Psi_n(t)) = \exp\Big(\int_\R (e^{itx} - 1 - itx) \,\nu_n(\dint x)\Big), \quad t \in \R,\]
    with measures \(\nu_n\) on $\R$ that satisfy
    \begin{equation}\label{eq:lem_CLT_condition_1}
        \int_\R x^2 \,\nu_n(\dint x) = 1.
    \end{equation} 
    \begin{enumerate}[label=\emph{(\alph*)}]
    \item If
    \begin{equation}\label{eq:lem_CLT_condition_2}
        \int_{\{|x| > \varepsilon\}} x^2 \,\nu_n(\dint x) \to 0 \quad \text{for all} \quad \varepsilon > 0,
    \end{equation}
    then $X_n \xrightarrow{D}\mathcal{N}(0,1)$.
    \item If
    \begin{equation}\label{eq:lem_CLT_condition_3}
        \int_{\{|x| \leq \varepsilon\}} x^2 \,\nu_n(\dint x) \to 0 \quad \text{for all} \quad \varepsilon > 0,
    \end{equation}
    then $X_n \xrightarrow{D} 0$.
    \end{enumerate}
\end{lemma}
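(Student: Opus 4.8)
The natural approach is to work directly with the cumulant exponent $\Psi_n(t)=\int_\R(e^{itx}-1-itx)\,\nu_n(\dint x)$ and show pointwise convergence of $\varphi_{X_n}(t)$ to the appropriate limit, then invoke Lévy's continuity theorem. Since the candidate limits are $\mathcal N(0,1)$ (with characteristic function $e^{-t^2/2}$) and the constant $0$ (with characteristic function $\equiv 1$), it suffices in case (a) to show $\Psi_n(t)\to -t^2/2$ for every fixed $t$, and in case (b) to show $\Psi_n(t)\to 0$ for every fixed $t$. The key elementary tool is the bound $|e^{iy}-1-iy|\le \tfrac12 y^2$ together with the refined estimate $|e^{iy}-1-iy+\tfrac12 y^2|\le \tfrac16|y|^3$, both valid for all real $y$; these let me compare $\Psi_n(t)$ to $-\tfrac{t^2}{2}\int x^2\,\nu_n(\dint x) = -t^2/2$.

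For part (a): split the integral defining $\Psi_n(t)+\tfrac{t^2}{2}$ (using \eqref{eq:lem_CLT_condition_1} to write $\tfrac{t^2}{2}=\tfrac{t^2}{2}\int x^2\nu_n(\dint x)$) into the regions $\{|x|\le\varepsilon\}$ and $\{|x|>\varepsilon\}$. On $\{|x|\le\varepsilon\}$, apply the cubic bound with $y=tx$ to get a contribution bounded by $\tfrac{|t|^3}{6}\int_{\{|x|\le\varepsilon\}}|x|^3\,\nu_n(\dint x)\le \tfrac{|t|^3\varepsilon}{6}\int x^2\,\nu_n(\dint x)=\tfrac{|t|^3\varepsilon}{6}$. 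On $\{|x|>\varepsilon\}$, bound $|e^{itx}-1-itx|\le\tfrac12 t^2x^2$ and $\tfrac12 t^2x^2$ separately, so this contribution is at most $t^2\int_{\{|x|>\varepsilon\}}x^2\,\nu_n(\dint x)$, which tends to $0$ by \eqref{eq:lem_CLT_condition_2}. Letting $n\to\infty$ and then $\varepsilon\to 0$ yields $\Psi_n(t)\to -t^2/2$, hence $\varphi_{X_n}(t)\to e^{-t^2/2}$ and the conclusion follows from Lévy's continuity theorem.

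For part (b): use the quadratic bound $|e^{itx}-1-itx|\le\tfrac12 t^2x^2$ uniformly, and again split at $\{|x|\le\varepsilon\}$ versus $\{|x|>\varepsilon\}$. The first region contributes at most $\tfrac12 t^2\int_{\{|x|\le\varepsilon\}}x^2\,\nu_n(\dint x)\to 0$ by \eqref{eq:lem_CLT_condition_3}; the second region has total $\nu_n$-mass controlled by $\varepsilon^{-2}\int_{\{|x|>\varepsilon\}}x^2\,\nu_n(\dint x)\le\varepsilon^{-2}$, so on it $|e^{itx}-1-itx|\le\tfrac12 t^2x^2$ integrates to at most $\tfrac12 t^2$, which is not small — so here I instead note that \eqref{eq:lem_CLT_condition_3} together with \eqref{eq:lem_CLT_condition_1} forces $\int_{\{|x|>\varepsilon\}}x^2\,\nu_n(\dint x)\to 1$, and I should bound the tail contribution more carefully: write $|e^{itx}-1-itx|\le 2|tx|$ for $|x|>\varepsilon$ (since $|e^{iy}-1-iy|\le|y|+|y|\le 2|y|$... actually $\le 2|y|$ needs $|y|\ge$ something), so better use $|e^{itx}-1-itx|\le\min\{\tfrac12 t^2x^2,\,2|t||x|\}\le\tfrac{2|t|}{\varepsilon^{?}}$; the clean move is $|e^{itx}-1-itx|\le\tfrac12 t^2 x^2\cdot\mathbf 1_{\{|x|\le\varepsilon\}} + (2+|t|\,|x|)\mathbf 1_{\{|x|>\varepsilon\}}$ and on the tail use Cauchy–Schwarz: $\int_{\{|x|>\varepsilon\}}|x|\,\nu_n(\dint x)\le \big(\int x^2\nu_n\big)^{1/2}\big(\nu_n(\{|x|>\varepsilon\})\big)^{1/2}\le\varepsilon^{-1}\big(\int_{\{|x|>\varepsilon\}}x^2\nu_n\big)^{1/2}\cdot$(no help). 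The honest resolution: since $\int_{\{|x|\le\varepsilon\}}x^2\nu_n\to 0$, a truncation argument shows the tail part $\int(e^{itx}-1)\,\nu_n^{>\varepsilon}(\dint x)$ together with the linear correction converges — but I expect the cleanest route is to observe that \eqref{eq:lem_CLT_condition_3} makes $X_n$ asymptotically a centred compound-Poisson-type variable whose variance $\int_{\{|x|>\varepsilon\}}x^2\nu_n$ stays bounded by $1$ yet whose jumps are all of size $>\varepsilon$; then show $\mathbb V(X_n)=1$ but the \emph{small-ball} mass vanishes, forcing... no.

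Let me restate (b) correctly: the clean argument is that \eqref{eq:lem_CLT_condition_1} and \eqref{eq:lem_CLT_condition_3} together are impossible to reconcile with boundedness unless the mass escapes to $x\to 0$ — wait, \eqref{eq:lem_CLT_condition_3} says mass near $0$ vanishes, so all the variance concentrates at $|x|>\varepsilon$ for every $\varepsilon$, meaning it escapes to $\infty$ or to a fixed atom; in the escape-to-$\infty$ scenario $\Psi_n(t)\to 0$ because $e^{itx}-1-itx$ oscillates. Concretely: for fixed $t$, $\int_{\{|x|>\varepsilon\}}(e^{itx}-1)\,\nu_n(\dint x)$ — hmm, $\nu_n(\{|x|>\varepsilon\})$ could blow up. I think the right hypothesis-use is: bound $|\Psi_n(t)|\le \int_{\{|x|\le\varepsilon\}}\tfrac12 t^2x^2\nu_n + \int_{\{|x|>\varepsilon\}}|e^{itx}-1-itx|\,\nu_n$, and for the second use $|e^{itx}-1-itx|\le \tfrac{t^2}{2}x^2$ AND that $\int_{\{|x|>\varepsilon\}}x^2\nu_n\le 1$; combined with a diagonal choice $\varepsilon=\varepsilon_n\to 0$ slowly... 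The main obstacle is precisely making part (b) rigorous without circularity; I anticipate the correct statement needs $\nu_n$ to be such that mass escapes to infinity (not to a fixed atom), which \eqref{eq:lem_CLT_condition_3} alone guarantees the variance does, and then $e^{itx}-1-itx$ being bounded by $\tfrac12 t^2x^2$ with the $x^2$-integral finite is not enough — one genuinely needs $\int_{\{|x|>\varepsilon\}}x^2\nu_n(\dint x)$ to be the only surviving part, and on that part writes $e^{itx}-1-itx = (e^{itx}-1-itx+\tfrac12 t^2x^2) - \tfrac12 t^2 x^2$; I will trust that the intended proof picks $\varepsilon$ after $n$ via a diagonal argument, and flag this splitting as the crux.

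$$\text{(I will present (a) in full and give (b) via the dominated-oscillation / escaping-mass argument, noting the diagonal choice of }\varepsilon\text{ is the delicate point.)}$$
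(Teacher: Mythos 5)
Your part (a) is correct and is essentially the paper's argument: compare $\Psi_n(t)$ with $-\tfrac{t^2}{2}\int x^2\,\nu_n(\dint x)$, use the cubic Taylor bound on $\{|x|\le\varepsilon\}$ and the quadratic bound on $\{|x|>\varepsilon\}$, let $n\to\infty$, then $\varepsilon\to 0$.

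Part (b), however, is left with a genuine gap: you never produce a complete argument, and you end by deferring to an unspecified ``diagonal choice of $\varepsilon$'' and an escaping-mass heuristic. The missing idea is simple and you actually brushed past it twice. First, the linear bound $|e^{iy}-1-iy|\le 2|y|$ holds for \emph{all} real $y$ (it is the $n=1$ case of the estimate $|e^{iy}-\sum_{k\le n}(iy)^k/k!|\le\min\{2|y|^n/n!,|y|^{n+1}/(n+1)!\}$, since $|e^{iy}-1|\le|y|$ always), so no size restriction is needed. Second, and this is the crux you missed, hypothesis \eqref{eq:lem_CLT_condition_3} is assumed for \emph{every} $\varepsilon>0$, including arbitrarily large $\varepsilon$; you treated $\varepsilon$ throughout as a small truncation level, which is what led you to worry about fixed atoms and to look for a diagonal argument. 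The paper's proof is: for fixed $t$ and fixed (possibly large) $\varepsilon$,
\[
|\Psi_n(t)|\;\le\;\frac{t^2}{2}\int_{\{|x|\le\varepsilon\}}x^2\,\nu_n(\dint x)\;+\;\int_{\{|x|>\varepsilon\}}2|t||x|\,\nu_n(\dint x)
\;\le\;\frac{t^2}{2}\int_{\{|x|\le\varepsilon\}}x^2\,\nu_n(\dint x)\;+\;\frac{2|t|}{\varepsilon},
\]
where on the tail one writes $|x|\le x^2/\varepsilon$ and uses \eqref{eq:lem_CLT_condition_1}. The first term tends to $0$ as $n\to\infty$ by \eqref{eq:lem_CLT_condition_3} applied to this very $\varepsilon$, so $\limsup_n|\Psi_n(t)|\le 2|t|/\varepsilon$, and letting $\varepsilon\to\infty$ gives $\Psi_n(t)\to 0$, hence $X_n\xrightarrow{D}0$ by Lévy's continuity theorem. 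No diagonal argument, no compound-Poisson or escaping-mass considerations are needed; also note that \eqref{eq:lem_CLT_condition_3} with large $\varepsilon$ already rules out the ``fixed atom'' scenario you were concerned about. As written, your proposal does not establish part (b).
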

\begin{proof}
    We recall the following bound for the Taylor approximation of the exponential function from \cite[Lemma 6.15]{kallenberg}:
    \begin{equation}\label{eq:bound_taylor}
        \Big|e^{ix} - \sum_{k=0}^n \frac{(ix)^k}{k!}\Big| \leq \min\Big\{\frac{2|x|^n}{n!},\frac{|x|^{n+1}}{(n+1)!}\Big\},\quad x\in\R,\,n\in\N.
    \end{equation}
    
    To prove (a),
    assume that \eqref{eq:lem_CLT_condition_2} holds.
    Note that by Lévy's continuity theorem, it is enough to show that $\Psi_n(t) \to -t^2/2$ for all $t \in \R$.
    We thus fix $t \in \R$ and observe that by \eqref{eq:lem_CLT_condition_1} and the triangle inequality,
    \[ |\Psi_n(t) - (-t^2/2)| = \left| \Psi_n(t) - \int_\R \frac{(itx)^2}{2} \,\nu_n(\dint x) \right| \leq \int_\R \left|e^{itx} - 1 - itx - \frac{(itx)^2}{2}\right| \,\nu_n(\dint x), \]
    for all $n\in\N$.
    Splitting the integral at an arbitrary $\varepsilon > 0$ and applying \eqref{eq:bound_taylor} to both parts, we obtain
    \begin{align*}
    &\int_\R \left|e^{itx} - 1 - itx - \frac{(itx)^2}{2}\right| \,\nu_n(\dint x) 
    \leq \int_{\{|x|\leq\varepsilon\}} \frac{|tx|^3}{6} \,\nu_n(\dint x) + \int_{\{|x| > \varepsilon\}} |tx|^2 \,\nu_n(\dint x)\\
    &\leq \frac{|t^3|}{6} \int_{\{|x|\leq\varepsilon\}} \varepsilon x^2 \,\nu_n(\dint x) + t^2 \int_{\{|x| > \varepsilon\}} x^2 \,\nu_n(\dint x)
    \leq \varepsilon \frac{|t^3|}{6} + t^2 \int_{\{|x| > \varepsilon\}}x^2 \,\nu_n(\dint x),
    \end{align*}
    where we used \eqref{eq:lem_CLT_condition_1} in the last step.
    Now \eqref{eq:lem_CLT_condition_2} yields
    \[ \limsup_{n\to\infty} |\Psi_n(t) - (-t^2/2)| \leq \varepsilon |t|^3/6 + 0. \]
    Since $\varepsilon$ was arbitrary, the first claim follows.

    For part (b), we assume that \eqref{eq:lem_CLT_condition_3} holds. Let again $t\in\R$ be fixed. 
    By the triangle inequality, for all $n\in\N$ we get that
    \[ |\Psi_n(t)| \leq \int_\R |e^{itx} - 1 - itx| \,\nu_n(\dint x). \]
    Splitting the integral at an arbitrary $\varepsilon > 0$ and applying \eqref{eq:bound_taylor} to both parts, we obtain
    \begin{align*}
    &\int_\R |e^{itx} - 1 - itx| \,\nu_n(\dint x) 
    \leq \int_{\{|x|\leq\varepsilon\}} \frac{|tx|^2}{2} \,\nu_n(\dint x) + \int_{\{|x| > \varepsilon\}} 2|tx| \,\nu_n(\dint x)\\
    &\leq \frac{t^2}{2} \int_{\{|x|\leq\varepsilon\}} x^2 \,\nu_n(\dint x) + 2|t| \int_{\{|x| > \varepsilon\}} \frac{x^2}{\varepsilon}\,\nu_n(\dint x)
    \leq \frac{t^2}{2} \int_{\{|x|\leq\varepsilon\}} x^2 \,\nu_n(\dint x) + \frac{2|t|}{\varepsilon},   
    \end{align*}
    where we used \eqref{eq:lem_CLT_condition_1} in the last step.
    Now \eqref{eq:lem_CLT_condition_3} yields
    \[ \limsup_{n\to\infty} |\Psi_n(t)| \leq 0 + 2|t|/\varepsilon. \]
    Since $\varepsilon $ can be chosen arbitrarily large, it follows that $\Psi_n(t) \to 0$, for every $t\in\R$, so $X_n \xrightarrow{D} 0$ by Lévy's continuity theorem.
\end{proof}

For further background material on infinitely divisible distributions we refer to the monographs \cite{GK54,Sato} as well as to Chapter 7 of \cite{kallenberg}.

\section{Estimates for beta functions}\label{sec:Beta}

The \textit{Beta function} is defined as
\begin{equation}\label{eq:beta_def}
    B(p,q) := \frac{\Gamma(p)\Gamma(q)}{\Gamma(p+q)}, \quad p,q >0,
\end{equation}
where $\Gamma$ is the usual Gamma function.
It is readily seen from the integral representation
$B(p,q) = \int_0^1 t^{p-1} (1-t)^{q-1} \dint t$ 
that \(B(p,q)\) is decreasing in both arguments \(p,q\).
It follows in particular that
\begin{equation*}
    B(p,q) \leq B({\textstyle\frac{1}{2}},{\textstyle\frac{1}{2}}) = \frac{\Gamma(\frac{1}{2})^2}{\Gamma(1)} = \pi, \quad p,q \ge \frac{1}{2},
\end{equation*}
where we used \eqref{eq:beta_def}, as well as the fact that \(\Gamma(1/2) = \sqrt{\pi}\) and \(\Gamma(1) = 1\).
Thus,
\begin{equation}\label{eq:beta_bound_pi}
    \frac{\Gamma(p+q)}{\Gamma(p)} = \frac{1}{B(p,q)} \Gamma(q) \geq \frac{1}{\pi} \Gamma(q), \quad p,q \ge \frac{1}{2}.
\end{equation}
The inequality
\begin{equation}\label{eq:gautschi}
    z^{1-t} \le \frac{\Gamma(z+1)}{\Gamma(z+t)} 
    , \quad z\ge 0,\, t \in [0,1], 
\end{equation}
is due to Wendel, see \cite[Equation (4)]{Wendel} (for $z=0$ and $t=1$ relation \eqref{eq:gautschi} holds with $0^0:=1$ and for $z=t=0$ it holds with $\Gamma(0)^{-1}=0$).
\begin{lemma}\label{lem:gamma_ratio}
\begin{enumerate}[label=\emph{(\alph*)}]
\item  If $p \ge 1$ and $q \geq 0$, then $\Gamma(p+q) \geq \Gamma(p) (p-1)^q$.
\item  If $p,q > 0$, then $\Gamma(p+q) \leq \Gamma(p)(p+q)^q$.
\item  If $q \geq 0$ is fixed, then $\Gamma(p+q) \sim \Gamma(p)p^q, \quad p \to \infty$.
\end{enumerate}
\end{lemma}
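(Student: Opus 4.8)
The plan is to derive all three parts from Wendel's inequality~\eqref{eq:gautschi}, using one elementary bookkeeping device for (a) and (b) and then obtaining (c) by a squeeze. Throughout I would split $q=m+s$, where $m:=\lfloor q\rfloor$ is a nonnegative integer and $s\in[0,1)$, and expand $\Gamma(p+q)=\bigl(\prod_{j=0}^{m-1}(p+s+j)\bigr)\Gamma(p+s)$ via the functional equation $\Gamma(x+1)=x\Gamma(x)$.

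For part (a), each of the $m$ factors satisfies $p+s+j\ge p\ge p-1$, so $\prod_{j=0}^{m-1}(p+s+j)\ge (p-1)^m$. For the remaining ratio I apply~\eqref{eq:gautschi} with $z=p+s-1$, which is $\ge 0$ precisely because $p\ge 1$, and $t=1-s\in(0,1]$; this gives $\Gamma(p+s)/\Gamma(p)\ge (p+s-1)^s\ge (p-1)^s$. Multiplying the two estimates yields $\Gamma(p+q)\ge\Gamma(p)(p-1)^{m+s}=\Gamma(p)(p-1)^q$. The boundary cases $p=1$, $q=0$, or $q\in\N$ are either trivial or covered by the conventions $0^0:=1$ and $\Gamma(0)^{-1}:=0$ recorded after~\eqref{eq:gautschi}.

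For part (b), I would first note that, via the recursion $\Gamma(z+1)=z\Gamma(z)$, inequality~\eqref{eq:gautschi} is equivalent to the companion upper bound $\Gamma(z+t)\le z^{t}\Gamma(z)$ for $z>0$, $t\in[0,1]$. With the same decomposition, each factor now satisfies $p+s+j\le p+m+s-1<p+q$, hence $\prod_{j=0}^{m-1}(p+s+j)\le (p+q)^m$, while the companion bound with $z=p$, $t=s$ gives $\Gamma(p+s)\le p^{s}\Gamma(p)\le (p+q)^{s}\Gamma(p)$. Multiplying gives $\Gamma(p+q)\le\Gamma(p)(p+q)^q$. Part (c) is then immediate: for large $p$ (in particular $p\ge 1$) and $q>0$ (the case $q=0$ being trivial), parts (a) and (b) sandwich the ratio as $(1-\tfrac1p)^q\le \Gamma(p+q)/(\Gamma(p)p^q)\le (1+\tfrac{q}{p})^q$, and both ends tend to $1$ as $p\to\infty$.

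I do not anticipate a genuine obstacle here. The only points demanding care are the integer/fractional bookkeeping in the split of $q$ (especially the empty product when $m=0$), the boundary conventions for $0^0$ and $\Gamma(0)^{-1}$, and the small but convenient observation that~\eqref{eq:gautschi} together with the $\Gamma$-recursion already encodes both a lower and an upper bound for the relevant ratio, so that no second inequality needs to be imported.
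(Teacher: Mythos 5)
Your proposal is correct and follows essentially the same route as the paper: the same split $q=m+s$, repeated use of $\Gamma(x+1)=x\Gamma(x)$, and Wendel's inequality \eqref{eq:gautschi} (with $z=p+s-1$, $t=1-s$ for the lower bound and, in the equivalent ``companion'' form, with $z=p$, $t=s$ for the upper bound), with (c) obtained by sandwiching exactly as the paper intends. The only cosmetic difference is that the paper takes $s\in(0,1]$ in part (b) while you take $s\in[0,1)$ and state the upper-bound form of \eqref{eq:gautschi} explicitly; this changes nothing of substance.
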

\begin{proof} (a) Let $q=n+s$ with $n\in\N_0$ and $s\in [0,1)$. Repeated application of \(\Gamma(x+1) = x \Gamma(x)\), $x>0$, yields 
$\Gamma(p+q)\ge \Gamma(p+s)(p+s)^n$. From \eqref{eq:gautschi} with $z=p+s-1\ge 0$ and $t=1-s$, we obtain $\Gamma(p+s)\ge \Gamma(p)(p+s-1)^s$. Hence, $\Gamma(p+q)\ge \Gamma(p)(p+s)^n(p+s-1)^s\ge \Gamma(p)(p-1)^{n+s}=\Gamma(p)(p-1)^q$.

    (b) Let $q=n+s$ with $n\in\N_0$ and $s\in (0,1]$. Repeated application of \(\Gamma(x+1) = x \Gamma(x)\), $x>0$, and  \eqref{eq:gautschi} with $z=p$ and $t=s$ yield $\Gamma(p+q)\le (p+q)^n\Gamma(p+s)\le (p+q)^{n+s}\Gamma(p+1)p^{-1}=\Gamma(p)(p+q)^{q}$.

    (c) The last claim follows directly from parts (a) and (b).
\end{proof}
\noindent Finally, we recall from \cite[Section 12.33]{WW} that 
\begin{equation}\label{eq:Stirlingapp}
\sqrt{\frac{2\pi}{z}}\left(\frac{z}{e}\right)^z\le \Gamma(z)\le e^{\frac{1}{12z}}\sqrt{\frac{2\pi}{z}}\left(\frac{z}{e}\right)^z,\quad  z\ge 1,
\end{equation}
which is the lower and upper  \textit{Stirling approximation} for the Gamma function.

\subsection{Incomplete Beta function and Beta distribution}

For \(p,q > 0\) and \(x \in [0,1]\), we write
\[B_x(p,q) \coloneqq \int_0^x u^{p-1} (1-u)^{q-1} \,\dint u \]
for the \textit{incomplete Beta function} and
$$
I_x(p,q) \coloneqq I(p,q;x)\coloneqq B_x(p,q)/B(p,q)
$$
for the \textit{regularized} (or normalized) \textit{incomplete Beta function}.
We extend the domain of $I(p,q;\cdot)$ to the whole of $\R$ by setting $I(p,q;x) \coloneqq 0$ for $x < 0$ and $I(p,q;x) \coloneqq 1$ for $x > 1$.
The probability distribution \(\Beta(p,q)\) on $\R$ with cumulative distribution function \(I(p,q;\, \cdot\, )\) is called the \emph{Beta distribution} with parameters \(p,q\).
The mean and variance of \(\Beta(p,q)\) are given by \(\mu := \frac{p}{p+q}\) and \(\sigma^2 := \frac{pq}{(p+q)^2(p+q+1)}\), respectively.

\begin{lemma}\label{lem:chebyshef}
    Let \(p,q > 0\) and \(x \in \R\), and
    write \(\mu = \frac{p}{p+q}\).
    Then $I_x(p,q) \leq \frac{1}{p(\frac{x}{\mu}-1)^2}$ for $x < \mu$ and $I_x(p,q) \geq 1 - \frac{1}{p(\frac{x}{\mu}-1)^2}$ for $x > \mu$.
\end{lemma}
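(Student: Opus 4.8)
The plan is to recognize $I_x(p,q)$ as the cumulative distribution function of a random variable $Y\sim\Beta(p,q)$ evaluated at $x$, and then to apply Chebyshev's inequality. Recall from the text that $\mu=\E(Y)=\frac{p}{p+q}$ and $\sigma^2=\V(Y)=\frac{pq}{(p+q)^2(p+q+1)}$. For $x<\mu$ we have $I_x(p,q)=\P(Y\le x)\le\P(|Y-\mu|\ge\mu-x)\le\frac{\sigma^2}{(\mu-x)^2}$, and symmetrically, for $x>\mu$ we have $1-I_x(p,q)=\P(Y>x)\le\P(|Y-\mu|\ge x-\mu)\le\frac{\sigma^2}{(x-\mu)^2}$.

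The only remaining point is to compare $\frac{\sigma^2}{(x-\mu)^2}$ with the claimed bound $\frac{1}{p(\frac{x}{\mu}-1)^2}=\frac{\mu^2}{p(x-\mu)^2}$. Since the factor $(x-\mu)^2$ is common to both denominators, it suffices to verify the purely algebraic inequality $\sigma^2\le\mu^2/p$. Substituting $\mu=\frac{p}{p+q}$ and the formula for $\sigma^2$, this reduces to $\frac{q}{p+q+1}\le1$, i.e.\ $0\le p+1$, which holds for all $p,q>0$. This settles both asserted inequalities whenever $x\in(0,1)$; the degenerate cases $x\le0$ (where $I_x(p,q)=0$, so the first inequality is trivial) and $x\ge1$ (where $I_x(p,q)=1$, so the second is trivial) follow directly from the extended definition of $I(p,q;\cdot)$.

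There is essentially no obstacle beyond this bookkeeping: the mathematical content is the single variance estimate $\sigma^2\le\mu^2/p$, and everything else is Chebyshev's inequality for $Y\sim\Beta(p,q)$ together with the edge-case check. If one prefers to avoid probabilistic language, the same estimate can be obtained directly from $B_x(p,q)=\int_0^x u^{p-1}(1-u)^{q-1}\,\dint u$: for $x<\mu$ one has $(\mu-u)^2\ge(\mu-x)^2$ on $[0,x]$, hence $B_x(p,q)\le\frac{1}{(\mu-x)^2}\int_0^1(\mu-u)^2 u^{p-1}(1-u)^{q-1}\,\dint u=\frac{\sigma^2 B(p,q)}{(\mu-x)^2}$, and dividing by $B(p,q)$ recovers $I_x(p,q)\le\frac{\sigma^2}{(\mu-x)^2}$; the case $x>\mu$ is analogous, integrating over $[x,1]$.
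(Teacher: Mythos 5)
Your proof is correct and follows essentially the same route as the paper: Chebyshev's inequality for a $\Beta(p,q)$-distributed random variable combined with the elementary estimate $\sigma^2 \le \mu^2/p$ (which the paper phrases as $\sigma^2/\mu^2 \le 1/p$). The added edge-case discussion for $x\le 0$, $x\ge 1$ and the direct integral variant are fine but not needed beyond what the paper does.
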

\begin{proof}
    Let \(Y\) be a random variable with distribution $\Beta(p,q)$.
    Then \(\sigma^2/\mu^2 \leq 1/p\), as follows from the expressions for $\mu$ and $\sigma^2$ above.
    For \(x < \mu\), the Chebyshef inequality then yields
    \[ I_x(p,q) = \P(Y \leq x) \leq \P(|Y-\mu| \geq |x-\mu|) \leq \frac{\sigma^2}{(x-\mu)^2}
    = \frac{\sigma^2}{\mu^2} \frac{1}{(\frac{x}{\mu}-1)^2}  \leq \frac{1}{p(\frac{x}{\mu}-1)^2}.\]
    Likewise, for \(x > \mu\) we get 
    \[ 1 - I_x(p,q) = \P(Y > x) \leq \P(|Y-\mu| > |x-\mu|) \leq \frac{1}{p(\frac{x}{\mu}-1)^2},\]
which concludes the argument.
\end{proof}

An application of Lemma \ref{lem:chebyshef} yields the following result, which in turn will be used below to establish Theorems \ref{thm:CLT}   and \ref{thm:rescaling_levy_measure}.

\begin{lemma}\label{lem:consequences1}
Let \((p_n)_{n\geq 1}, (q_n)_{n\geq 1}\) be sequences in \((0,\infty)\) and \((x_n)_{n\geq 1}\) be a sequence in $\R$.
Write \(\mu_n = p_n/(p_n+q_n)\). 
\begin{enumerate}
    \item[\emph{(a)}] If \(x_n/\mu_n \to \infty\) and \(\liminf\limits_{n\to\infty}p_n > 0\), then \(I_{x_n}(p_n,q_n) \to 1\).
    \item[\emph{(b)}] If \(\liminf\limits_{n\to\infty}x_n/\mu_n > 1\) and \(p_n \to \infty\), then \(I_{x_n}(p_n,q_n) \to 1\).
    \item[\emph{(c)}] If \(\limsup\limits_{n\to\infty}x_n/\mu_n < 1\) and \(p_n \to \infty\), then \(I_{x_n}(p_n,q_n) \to 0\).
\end{enumerate}
\end{lemma}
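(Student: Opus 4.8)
The plan is to obtain all three statements directly from the Chebyshev-type bounds of Lemma~\ref{lem:chebyshef}. Write $E_n := \frac{1}{p_n\left(\frac{x_n}{\mu_n}-1\right)^2}$; then Lemma~\ref{lem:chebyshef} says $I_{x_n}(p_n,q_n)\le E_n$ whenever $x_n<\mu_n$ and $I_{x_n}(p_n,q_n)\ge 1-E_n$ whenever $x_n>\mu_n$. Since all three conclusions concern limits, it suffices to argue for all sufficiently large $n$; moreover $\mu_n=p_n/(p_n+q_n)\in(0,1)$ always, so $x_n/\mu_n$ is well defined and the sign conditions $x_n<\mu_n$, $x_n>\mu_n$ are equivalent to $x_n/\mu_n<1$, $x_n/\mu_n>1$. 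In each part I would first check that the hypotheses place $x_n$ on the side of $\mu_n$ on which the relevant bound of Lemma~\ref{lem:chebyshef} applies for all large $n$, then show $E_n\to 0$, and finally conclude by squeezing against the trivial inequalities $0\le I_{x_n}(p_n,q_n)\le 1$.

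For (a): $x_n/\mu_n\to\infty$ gives $x_n>\mu_n$ for large $n$ and $\left(\frac{x_n}{\mu_n}-1\right)^2\to\infty$; since $\liminf_{n\to\infty}p_n>0$, this forces $p_n\left(\frac{x_n}{\mu_n}-1\right)^2\to\infty$, i.e.\ $E_n\to 0$, and the lower bound $I_{x_n}(p_n,q_n)\ge 1-E_n$ gives $I_{x_n}(p_n,q_n)\to 1$. For (b): $\liminf_{n\to\infty}x_n/\mu_n>1$ yields some $\delta>0$ with $\frac{x_n}{\mu_n}-1\ge\delta$ for all large $n$ (in particular $x_n>\mu_n$), so $E_n\le\frac{1}{p_n\delta^2}\to 0$ because $p_n\to\infty$, and again $I_{x_n}(p_n,q_n)\to 1$. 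For (c): $\limsup_{n\to\infty}x_n/\mu_n<1$ yields some $\delta\in(0,1)$ with $x_n/\mu_n\le 1-\delta$ for all large $n$ (in particular $x_n<\mu_n$ and $\left(\frac{x_n}{\mu_n}-1\right)^2\ge\delta^2$), so the upper bound gives $I_{x_n}(p_n,q_n)\le\frac{1}{p_n\delta^2}\to 0$.

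I do not expect a genuine obstacle: the argument is a short application of Lemma~\ref{lem:chebyshef} followed by a squeeze. The only points needing a sentence of justification are that the hypotheses indeed force $x_n$ onto the correct side of $\mu_n$ eventually (immediate from $\limsup$/$\liminf$ in (b) and (c), and from $x_n/\mu_n\to\infty$ in (a)), and the observation that in (b) and (c) the ratio hypothesis alone only bounds $\left|\frac{x_n}{\mu_n}-1\right|$ away from $0$, so the divergence $p_n\to\infty$ is essential there — whereas in (a) it is the divergence of the squared factor that does the work and $p_n$ need only stay bounded away from $0$.
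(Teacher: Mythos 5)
Your proof is correct and follows exactly the route intended by the paper, which states the lemma as a direct application of the Chebyshev-type bounds of Lemma~\ref{lem:chebyshef} without spelling out further details. Your careful handling of the side conditions ($x_n$ eventually on the correct side of $\mu_n$, and which of $p_n$ or the squared factor supplies the divergence in each part) fills in precisely the small checks the paper leaves implicit.
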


\section{Proofs}\label{sec:proofs}

Before giving the proofs of Theorems \ref{thm:CLT} and \ref{thm:rescaling_levy_measure}, we start with some preparations.

We call a pair of integers $(d,k)$ \textit{admissible} if $1\le k\le d-1$ and $2k>d+1$ (a sequence of such pairs is admissible if each pair in the sequence is admissible). 
Recall that for admissible $(d,k)$, the characteristic function of $Z_{d,k}$ is given by \eqref{eq:char_fct_Z}.
The variance $\sigma^2_{d,k} = \V(Z_{d,k}) = \int_\R x^2 \,\nu_{d,k}(\dint x)$ is finite, see \cite[Remark 1.5(ii)]{BHT23}.
Let $\nu^*_{d,k}$ denote the Lévy measure of $Z^*_{d,k} \coloneqq Z_{d,k} / \sigma_{d,k}$, which is given by $\nu^*_{d,k}(A) = \nu_{d,k}(\sigma_{d,k} A)$ for measurable $A\subset\R$.
Then $Z^*_{d,k}$ satisfies condition \eqref{eq:lem_CLT_condition_1}.
Define
\[ J(d,k,\varepsilon) \coloneqq \int_{\{|x| > \varepsilon\}} x^2 \,\nu^*_{d,k}(\dint x) = \int_\varepsilon^\infty  x^2 \,\nu^*_{d,k}(\dint x).\]
The following is an immediate consequence of Lemma \ref{lem:condition_CLT_alternative}.

\begin{corollary}\label{cor:CLT}
    Let $(d_n,k_n)_{n\geq 1}$ be an admissible sequence. If $J(d_n,k_n,\varepsilon) \to 0$ for all $\varepsilon>0$, then $Z^*_{d_n,k_n}\xrightarrow{D}\mathcal{N}(0,1)$, as $n\to \infty$. If $J(d_n,k_n,\varepsilon) \to 1$ for all $\varepsilon>0$, then $Z^*_{d_n,k_n} \xrightarrow{D} 0$, as $n\to\infty$.
\end{corollary}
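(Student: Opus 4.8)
The plan is to deduce this directly from Lemma~\ref{lem:condition_CLT_alternative} by checking that the rescaled random variables $Z^*_{d_n,k_n}$ fit the framework of that lemma. First I would record that the characteristic function of $Z^*_{d,k} = Z_{d,k}/\sigma_{d,k}$ is obtained from \eqref{eq:char_fct_Z} by the substitution $t\mapsto t/\sigma_{d,k}$ followed by the change of variables $x\mapsto x/\sigma_{d,k}$ in the integral, which produces exactly the form
\[
\varphi_{Z^*_{d,k}}(t) = \exp\Big(\int_\R (e^{itx}-1-itx)\,\nu^*_{d,k}(\dint x)\Big), \quad t\in\R,
\]
with $\nu^*_{d,k}(A) = \nu_{d,k}(\sigma_{d,k}A)$; as already noted, $\int_\R x^2\,\nu^*_{d,k}(\dint x) = \sigma_{d,k}^{-2}\int_\R x^2\,\nu_{d,k}(\dint x) = 1$, so condition \eqref{eq:lem_CLT_condition_1} holds for every admissible pair $(d,k)$.

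For the first assertion, observe that $J(d_n,k_n,\varepsilon) = \int_{\{|x|>\varepsilon\}} x^2\,\nu^*_{d_n,k_n}(\dint x)$ is precisely the quantity appearing in \eqref{eq:lem_CLT_condition_2}. Hence the hypothesis $J(d_n,k_n,\varepsilon)\to 0$ for all $\varepsilon>0$ is exactly condition \eqref{eq:lem_CLT_condition_2}, and Lemma~\ref{lem:condition_CLT_alternative}(a) applied to $X_n = Z^*_{d_n,k_n}$ gives $Z^*_{d_n,k_n}\xrightarrow{D}\mathcal{N}(0,1)$.

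For the second assertion, I would use \eqref{eq:lem_CLT_condition_1} to rewrite the complementary truncated second moment as
\[
\int_{\{|x|\le\varepsilon\}} x^2\,\nu^*_{d_n,k_n}(\dint x) = 1 - \int_{\{|x|>\varepsilon\}} x^2\,\nu^*_{d_n,k_n}(\dint x) = 1 - J(d_n,k_n,\varepsilon),
\]
which converges to $0$ for every $\varepsilon>0$ under the hypothesis $J(d_n,k_n,\varepsilon)\to 1$. This is condition \eqref{eq:lem_CLT_condition_3}, so Lemma~\ref{lem:condition_CLT_alternative}(b) yields $Z^*_{d_n,k_n}\xrightarrow{D} 0$. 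There is essentially no obstacle here: the corollary is a direct translation of Lemma~\ref{lem:condition_CLT_alternative} once the normalization is unwound, and the only points worth stating carefully are the change-of-variables identity relating $\nu^*_{d,k}$ to $\nu_{d,k}$ and the resulting normalization \eqref{eq:lem_CLT_condition_1}.
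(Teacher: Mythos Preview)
Your proof is correct and matches the paper's approach: the paper simply states that the corollary is an immediate consequence of Lemma~\ref{lem:condition_CLT_alternative}, and you have faithfully unpacked that claim by verifying that $Z^*_{d_n,k_n}$ satisfies \eqref{eq:lem_CLT_condition_1} and that the hypotheses on $J(d_n,k_n,\varepsilon)$ translate directly into \eqref{eq:lem_CLT_condition_2} and \eqref{eq:lem_CLT_condition_3}, respectively.
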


Note that for any $\varepsilon > 0$,
\[ J(d,k,\varepsilon) = \int_{\{|x| > \varepsilon\}} x^2 \,\nu^*_{d,k}(\dint x) = \int_{\{|x| > \sigma_{d,k}\varepsilon\}} \frac{x^2}{\sigma_{d,k}^2} \,\nu_{d,k}(\dint x)
= \frac{\int_{\{|x| > \sigma_{d,k}\varepsilon\}} x^2 \,\nu_{d,k}(\dint x)}{\int_\R x^2 \,\nu_{d,k}(\dint x)}. \]
Using \eqref{eq:levy_measure_Z} and substituting $u = x^\frac{2}{k-1}$, we can then express $J(d,k,\varepsilon)$ as
\begin{align} \frac{\int_{\{\sigma_{d,k}\varepsilon<x<1\}} x^{1-\frac{d-1}{k-1}}
	\bigl(1-x^{\frac{2}{k-1}}\bigr)^{\frac{d-k}{2}-1} \dint x}{\int_0^1 x^{1-\frac{d-1}{k-1}}
	\bigl(1-x^{\frac{2}{k-1}}\bigr)^{\frac{d-k-2}{2}} \dint x}\nonumber
    &= \frac{\int_{\{(\sigma_{d,k}\varepsilon)^\frac{2}{k-1} < u < 1\}} u^{\frac{2k-d-1}{2}-1} \, (1-u)^{\frac{d-k}{2}-1}\dint u}{{\int_0^1 u^{\frac{2k-d-1}{2}-1} \, (1-u)^{\frac{d-k}{2}-1}\dint u}}\nonumber\\
&= 1 - I\left({\textstyle\frac{2k-d-1}{2}}, {\textstyle\frac{d-k}{2}}; (\sigma_{d,k}\varepsilon)^\frac{2}{k-1}\right) \label{eq:J_identity_1}\\
&= I\left({\textstyle\frac{d-k}{2}}, {\textstyle\frac{2k-d-1}{2}}; 1- (\sigma_{d,k}\varepsilon)^\frac{2}{k-1}\right)\label{eq:J_identity_2},
\end{align}
where we used the fact that $I(p,q;x) + I(q,p;1-x) = 1$ for $p,q > 0$ and $x \in \R$ in the last line.

Note that by the same approach, one obtains
\begin{equation}
\label{eq:sigma}
    \sigma^2_{d,k}
    = \frac{\omega_{d-k}}{2} B\left(\textstyle\frac{2k-d-1}{2},\frac{d-k}{2}\right) 
    = \pi^\frac{d-k}{2} \frac{\Gamma\left(\frac{2k-d-1}{2}\right)}{\Gamma\left(\frac{2k-d-1}{2} + \frac{d-k}{2}\right)}
    = \pi^\frac{d-k}{2} \frac{\Gamma\left(\frac{2k-d-1}{2}\right)}{\Gamma\left(\frac{k-1}{2}\right)},
\end{equation}
where we used \eqref{eq:beta_def} for the second equality. The next lemma will ensure that $\sigma_{d_n,k_n}\varepsilon$, and hence the last argument of $I(\,\cdot\,,\,\cdot\,;\,\cdot\,)$ in \eqref{eq:J_identity_1} and \eqref{eq:J_identity_2}, lies between zero and one for large enough $n$. 
We believe this to be noteworthy, even though it is not needed for the proofs of Theorems \ref{thm:CLT} and \ref{thm:rescaling_levy_measure}.

\begin{lemma}\label{Le1}
   If \((d_n,k_n)_{n\geq 1}\) is an admissible sequence with $d_n \to \infty$,  then  \(\sigma_n=\sigma_{d_n,k_n} \to 0\), as $n\to\infty$.
\end{lemma}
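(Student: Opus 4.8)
The plan is to argue directly from the closed form \eqref{eq:sigma}. Set $b_n \coloneqq d_n-k_n\ge 1$ for the codimension and $r_n \coloneqq 2k_n-d_n-1\ge 1$ (both are positive integers because the sequence is admissible), and note that $r_n+b_n = k_n-1$. Then \eqref{eq:sigma} reads $\sigma_n^2 = \pi^{b_n/2}\,\Gamma(r_n/2)\big/\Gamma\big((r_n+b_n)/2\big)$. The first point to record is that $d_n\to\infty$ forces $k_n-1 = r_n+b_n\to\infty$: indeed $d_n = r_n+2b_n+1 \le 2(r_n+b_n)+1 = 2(k_n-1)+1$, so $k_n-1\ge (d_n-1)/2\to\infty$. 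Thus along the sequence at least one of $r_n,b_n$ becomes large; but neither of them needs to tend to infinity individually, so a single Gamma-ratio estimate will not suffice. Instead I would establish two complementary upper bounds for $\sigma_n^2$ — one effective when $b_n$ is large, one when $r_n$ is large — and then patch them together.

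For large $b_n$: applying \eqref{eq:beta_bound_pi} with $p=r_n/2\ge \tfrac{1}{2}$ and $q=b_n/2\ge\tfrac{1}{2}$ gives $\Gamma\big((r_n+b_n)/2\big)\ge \pi^{-1}\Gamma(r_n/2)\Gamma(b_n/2)$, hence $\sigma_n^2\le \pi^{b_n/2+1}/\Gamma(b_n/2)$, and since $\Gamma(b_n/2)$ grows faster than $\pi^{b_n/2}$ (cf.\ the lower Stirling bound in \eqref{eq:Stirlingapp}), this tends to $0$ as $b_n\to\infty$. For large $r_n$: applying Lemma \ref{lem:gamma_ratio}(a) with $p=r_n/2$ and $q=b_n/2$ (legitimate once $r_n\ge 2$) gives $\Gamma\big((r_n+b_n)/2\big)\ge \Gamma(r_n/2)(r_n/2-1)^{b_n/2}$, hence $\sigma_n^2\le \big(2\pi/(r_n-2)\big)^{b_n/2}$; once $r_n\ge 2+2\pi$ the base is at most $1$, so (using $b_n/2\ge\tfrac{1}{2}$) this is at most $\big(2\pi/(r_n-2)\big)^{1/2}\to 0$ as $r_n\to\infty$.

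To finish, fix $\varepsilon>0$, choose $B$ with $\pi^{b/2+1}/\Gamma(b/2)<\varepsilon$ for all integers $b\ge B$, and choose an integer $R\ge 2+2\pi$ with $\big(2\pi/(r-2)\big)^{1/2}<\varepsilon$ for all integers $r\ge R$. For $n$ large enough, $r_n+b_n = k_n-1\ge B+R$, so $b_n\ge B$ or $r_n\ge R$; in either case one of the two bounds above yields $\sigma_n^2<\varepsilon$. Hence $\sigma_n^2\to 0$ and therefore $\sigma_n\to 0$. I expect the only genuinely delicate point to be the dichotomy noted in the first paragraph: recognizing that the natural bound from Lemma \ref{lem:gamma_ratio}(a) alone does not close the argument, and that one must additionally control the large-codimension regime via \eqref{eq:beta_bound_pi}. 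Once both regimes are covered, the remainder is routine asymptotics of the Gamma function.
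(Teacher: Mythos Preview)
Your proof is correct and follows essentially the same two-regime strategy as the paper: bound $\sigma_n^2$ via \eqref{eq:beta_bound_pi} when the codimension $b_n=d_n-k_n$ is large, and via Lemma~\ref{lem:gamma_ratio}(a) when $r_n=2k_n-d_n-1$ is large, then patch using $r_n+b_n=k_n-1\to\infty$. Your application of Lemma~\ref{lem:gamma_ratio}(a) directly with exponent $q=b_n/2$ is marginally cleaner than the paper's detour through monotonicity of $\Gamma$ and $q=1/2$, but the ideas are identical.
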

\begin{proof}
Recall that $d_n-k_n\ge 1$ and $2k_n-d_n-1\ge 1$. Let $\varepsilon_0>0$ be given. We show that $\sigma_n^2\le \varepsilon_0$ if $n$ is large enough. In the following, we occasionally omit the index $n$. 

It follows from \eqref{eq:sigma}, \eqref{eq:beta_bound_pi} and \eqref{eq:Stirlingapp} that
    \[ \sigma^2_{d,k}
    = \pi^\frac{d-k}{2} \frac{\Gamma\left(\frac{2k-d-1}{2}\right)}{\Gamma\left(\frac{2k-d-1}{2} + \frac{d-k}{2}\right)}
    \leq  \pi^{\frac{d-k}{2}+1} \frac{1}{\Gamma\left(\frac{d-k}{2}\right)}
    \le \sqrt{d-k}\left(\frac{2e\pi}{d-k}\right)^{\frac{d-k}{2}}. \]
There is some $C_0> 1$ such that the upper bound is less or equal $\varepsilon_0$ if $d-k>C_0$. Hence $\sigma_n^2\le \varepsilon_0$ for all $n\in\N$ such that $d_n-k_n>C_0$. 

Now we consider all $n\in\N$ such that $d_n-k_n\le C_0$. For any such $n$ we have $k_n\ge d_n-C_0$ and $2k_n-d_n-1\ge d_n-2C_0-1$.
Hence, if there are infinitely many $n\in\N$ such that $d_n-k_n\le C_0$, then we can assume that $(2k_n-d_n-1)/2\ge 2$ if $n$ is large enough and $d_n-k_n\le C_0$.
For any such $n$,  the monotonicity of the Gamma function and Lemma \ref{lem:gamma_ratio} (a) yield 
\[ \sigma^2_{d,k}
    \le  \pi^\frac{C_0}{2} \frac{\Gamma\left(\frac{2k-d-1}{2}\right)}{\Gamma\left(\frac{2k-d-1}{2} + \frac{1}{2}\right)}
    \le  \pi^\frac{C_0}{2}\left(\frac{2k-d-3}{2}\right)^{-\frac{1}{2}}\le 
 \pi^\frac{C_0}{2}\left(\frac{d-2C_0-3}{2}\right)^{-\frac{1}{2}}   .
\]
If $n$ is large enough, the right-hand side is at most $\varepsilon_0$. 

Thus, if $n\in\N$ is large enough, in any case we obtain $\sigma_n^2\le \varepsilon_0$.
\end{proof}

\subsection{Proof of Theorem \ref{thm:CLT}}

Throughout this section, we consider admissible sequences $(d_n,k_n)_{n\geq 1}$.
Note that by Corollary \ref{cor:CLT}, the determination of $\lim_{n\to \infty}J(d_n,k_n,\varepsilon)$ is sufficient for proving the assertions of Theorem \ref{thm:CLT}. We start with the case where $\liminf_{n \to \infty} d_n/k_n > 1/2$. In what follows, we will frequently use the abbreviation $\sigma_n \coloneqq \sigma_{d_n,k_n}$.

\begin{proposition}\label{prop:k/d>1/2}
    Let \((d_n,k_n)_{n\geq 1}\) be an admissible sequence with \(d_n \to \infty\), as \(n\to\infty\).
    Suppose that \(\liminf\limits_{n\to\infty}k_n/d_n > 1/2\). Then \(\lim\limits_{n\to\infty} \JJ(d_n,k_n,\varepsilon) = 1\),  for any \(\varepsilon > 0\).
\end{proposition}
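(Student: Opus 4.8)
The plan is to use the identity \eqref{eq:J_identity_2}, which expresses
$$
\JJ(d_n,k_n,\varepsilon) = I\!\left({\textstyle\frac{d_n-k_n}{2}},\,{\textstyle\frac{2k_n-d_n-1}{2}};\,1-(\sigma_n\varepsilon)^{\frac{2}{k_n-1}}\right),
$$
and then apply Lemma \ref{lem:consequences1}(a) with $p_n = \frac{d_n-k_n}{2}$, $q_n = \frac{2k_n-d_n-1}{2}$ and $x_n = 1-(\sigma_n\varepsilon)^{2/(k_n-1)}$. To conclude $\JJ(d_n,k_n,\varepsilon)\to 1$ it suffices to verify the two hypotheses of that lemma: that $\liminf_n p_n > 0$, and that $x_n/\mu_n \to \infty$ where $\mu_n = p_n/(p_n+q_n)$.

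The first hypothesis is easy. Since $\liminf_n k_n/d_n > 1/2$, and the pair is admissible so $d_n-k_n \geq 1$, I would first note that either $d_n - k_n \to \infty$, in which case $p_n \to \infty$, or $(d_n-k_n)_{n\geq 1}$ is bounded and hence $p_n = \frac{d_n-k_n}{2}\geq \frac12$ along that subsequence; in all cases $\liminf_n p_n \geq \frac12 > 0$. For the second hypothesis, I compute $\mu_n = \frac{d_n-k_n}{2}\big/\big(\frac{d_n-k_n}{2}+\frac{2k_n-d_n-1}{2}\big) = \frac{d_n-k_n}{k_n-1}$. The key observation is that $\liminf_n k_n/d_n > 1/2$ forces $k_n - 1 \geq c\, d_n$ for some $c>1/2$ and large $n$, while $d_n - k_n$ could be small or grow slower than $d_n$; in any case $\mu_n \leq \frac{d_n}{k_n-1} \to$ a limit $< 2$, and more to the point $\mu_n$ stays bounded. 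Meanwhile $x_n = 1-(\sigma_n\varepsilon)^{2/(k_n-1)} \to 1$ because Lemma \ref{Le1} gives $\sigma_n \to 0$ and, crucially, $(\sigma_n\varepsilon)^{2/(k_n-1)}\to 0$: even though $k_n-1\to\infty$ makes the exponent small, one has $(\sigma_n\varepsilon)^{2/(k_n-1)} = \exp\!\big(\frac{2}{k_n-1}\log(\sigma_n\varepsilon)\big)$, and since $\log(\sigma_n\varepsilon)\to -\infty$ while $\frac{1}{k_n-1}$ is positive, I must check the product $\frac{\log(\sigma_n\varepsilon)}{k_n-1}\to 0$; this needs a quantitative lower bound on $\sigma_n$, which I expect to be the main technical point.

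Thus the main obstacle is controlling the rate of decay of $\sigma_n$: I need to rule out $\sigma_n$ decaying so fast (super-exponentially in $k_n$) that $\frac{2}{k_n-1}\log(\sigma_n\varepsilon)$ fails to vanish. Here I would use the Stirling bounds \eqref{eq:Stirlingapp} together with Wendel's inequality \eqref{eq:gautschi} or Lemma \ref{lem:gamma_ratio} to get a lower bound on $\sigma_n^2 = \pi^{(d_n-k_n)/2}\,\Gamma(\frac{2k_n-d_n-1}{2})/\Gamma(\frac{k_n-1}{2})$. Writing $a_n = \frac{2k_n-d_n-1}{2}$ and $b_n = \frac{k_n-1}{2}$, so $b_n - a_n = \frac{d_n-k_n}{2} = p_n$, Lemma \ref{lem:gamma_ratio}(b) gives $\Gamma(b_n) = \Gamma(a_n + p_n) \leq \Gamma(a_n) b_n^{p_n}$ (when $d_n-k_n$ is bounded one handles the half-integer shift directly), hence $\sigma_n^2 \geq \pi^{p_n} b_n^{-p_n} = (\pi/b_n)^{p_n}$. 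Therefore $\log(\sigma_n^2) \geq p_n\log(\pi/b_n) \geq -p_n\log b_n$, and since $p_n = \frac{d_n-k_n}{2} \leq \frac{d_n}{2}$ while $b_n \asymp k_n \asymp d_n$, we get $\frac{\log\sigma_n}{k_n-1} \geq -\frac{p_n\log b_n}{2(k_n-1)} = O\!\big(\frac{\log d_n}{d_n}\big)\cdot O(1) \to 0$ — wait, I should be careful: $p_n/(k_n-1)$ need not go to zero, but it stays bounded, and $\log b_n/(k_n-1)\to 0$, so the product does vanish. This gives $(\sigma_n\varepsilon)^{2/(k_n-1)}\to 0$, hence $x_n \to 1$ and $x_n/\mu_n \to 1/\lim\mu_n$ or $\to\infty$; since $\mu_n$ is bounded and $x_n\to 1$, we get $\liminf_n x_n/\mu_n \geq 1/\limsup_n\mu_n > 0$, but for Lemma \ref{lem:consequences1}(a) I actually need $x_n/\mu_n\to\infty$ — so I must instead show $\mu_n\to 0$. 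Indeed $\mu_n = \frac{d_n-k_n}{k_n-1}$; if $d_n-k_n$ is bounded this clearly tends to $0$, and if $d_n-k_n\to\infty$ then since $\liminf k_n/d_n>1/2$ forces $d_n-k_n = o(k_n)$ is NOT automatic — here I should reconsider and perhaps split into the sub-case $\limsup(d_n-k_n)/d_n < 1/2$ (guaranteed by $\liminf k_n/d_n>1/2$), giving $\mu_n = \frac{d_n-k_n}{k_n-1}$ bounded by a constant $<1$, and then it is cleaner to invoke Lemma \ref{lem:consequences1}(b) if $p_n\to\infty$ (using $\liminf x_n/\mu_n>1$ since $x_n\to1$ and $\limsup\mu_n<1$) and handle the bounded-$p_n$ case via Lemma \ref{lem:consequences1}(a) directly (there $\mu_n\to 0$ since the numerator is bounded and $k_n\to\infty$). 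I would organize the final write-up around exactly this dichotomy.
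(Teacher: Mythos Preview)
Your overall setup --- using identity \eqref{eq:J_identity_2} and applying Lemma \ref{lem:consequences1} with $p_n=\tfrac{d_n-k_n}{2}$, $q_n=\tfrac{2k_n-d_n-1}{2}$, $x_n=1-(\sigma_n\varepsilon)^{2/(k_n-1)}$ --- is exactly the paper's approach, and the final dichotomy you arrive at (use part (b) when $p_n\to\infty$, part (a) otherwise) is close to the paper's split on $\alpha:=\lim k_n/d_n\in(1/2,1]$. The problem is the central claim on which both branches of your dichotomy rest: that $x_n\to 1$.

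This claim is \emph{false} in general. Take $d_n-k_n$ bounded (so $\alpha=1$). Then by Lemma \ref{lem:gamma_ratio}(c), $\sigma_n^2\sim(2\pi/(2k_n-d_n-1))^{(d_n-k_n)/2}$, so $\tfrac{1}{k_n-1}\log\sigma_n^2\sim -\tfrac{(d_n-k_n)\log d_n}{2d_n}\to 0$, giving $(\sigma_n\varepsilon)^{2/(k_n-1)}\to 1$ and hence $x_n\to 0$, not $1$. The same happens for, say, $d_n-k_n=\sqrt{d_n}$ (where $p_n\to\infty$, so your part-(b) branch would be invoked, yet $x_n\to 0$). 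Your argument for $x_n\to 1$ contains two errors that happen to cancel in direction: (i) you set out to show $\tfrac{1}{k_n-1}\log(\sigma_n\varepsilon)\to 0$, but that gives $(\sigma_n\varepsilon)^{2/(k_n-1)}\to 1$ and $x_n\to 0$, the opposite of what you want; to get $x_n\to 1$ you would need the product to go to $-\infty$; (ii) in bounding $\tfrac{p_n\log b_n}{2(k_n-1)}$ you misgroup the factors --- this equals $\tfrac{p_n}{2(k_n-1)}\cdot\log b_n$, not $\tfrac{p_n}{k_n-1}\cdot\tfrac{\log b_n}{k_n-1}$, and the former need not vanish. Likewise, in your bounded-$p_n$ branch you only note $\mu_n\to 0$, but since $x_n\to 0$ there too, this alone does not give $x_n/\mu_n\to\infty$.

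The paper sidesteps all of this by never separating $x_n$ and $\mu_n$: it estimates the ratio $x_n/\mu_n = (1-(\sigma_n\varepsilon)^{2/(k-1)})\tfrac{k-1}{d-k}$ directly, inserting the formula \eqref{eq:sigma} for $\sigma_n$ and bounding the Gamma ratio via Lemma \ref{lem:gamma_ratio}(a). For $\alpha<1$ this gives $\liminf x_n/\mu_n\geq \alpha/(1-\alpha)>1$ and Lemma \ref{lem:consequences1}(b) applies; for $\alpha=1$ a short limit computation of $(1-c^t)/(2t)$ as $t\downarrow 0$ shows $x_n/\mu_n\to\infty$ and Lemma \ref{lem:consequences1}(a) applies. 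The point is that in the $\alpha=1$ regime the cancellation between numerator and denominator is essential, and treating them separately loses exactly the information you need.
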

\begin{proof}
    Fix an $\varepsilon > 0$.
    Without loss of generality, we assume that \(\lim_{n\to \infty} k_n/d_n \eqqcolon \alpha \in (1/2,1]\) exists.
    Using \eqref{eq:J_identity_2}, we have \(J(d_n,k_n,\varepsilon)=I(p_n,q_n;x_n)\) with
    $p_n = \frac{d_n-k_n}{2}$, $q_n = \frac{2k_n-d_n-1}{2}$ and $x_n = 1 - (\sigma_n\varepsilon)^\frac{2}{k_n-1}$.
    Dropping the index \(n\) where it is unlikely to cause confusion, we write
    \begin{align*}
        x_n \frac{p_n+q_n}{p_n}
        &= (1-(\sigma\varepsilon)^\frac{2}{k-1}) \frac{k-1}{d-k}
        = \Big( 1 - \varepsilon^\frac{2}{k-1} \Big(\pi^\frac{d-k}{2}\frac{\Gamma(\frac{2k-d-1}{2})}{\Gamma(\frac{2k-d-1}{2}+\frac{d-k}{2})}\Big)^\frac{1}{k-1} \Big) \frac{k-1}{d-k},
    \end{align*}
    where we used \eqref{eq:sigma} for the last step.
    Using Lemma \ref{lem:gamma_ratio} (a)  and the fact that $\frac{2k_n-d_n-3}{2} \geq \frac{2\alpha-1}{4}d_n$ if \(n\) is large enough, we get that
    \begin{align*}
        \liminf_{n \to \infty} x_n \frac{p_n+q_n}{p_n}
        &\geq \liminf_{n\to\infty} \Big( 1 - \varepsilon^\frac{2}{k-1} \Big(\pi^\frac{d-k}{2}\frac{1}{(\frac{2k-d-3}{2})^{\frac{d-k}{2}}}\Big)^\frac{1}{k-1} \Big) \frac{k-1}{d-k}\\
        &\geq \liminf_{n\to\infty} \Big( 1 - \varepsilon^\frac{2}{k-1} \Big(\pi^\frac{d-k}{2}\frac{1}{(\frac{2\alpha-1}{4}d)^{\frac{d-k}{2}}}\Big)^\frac{1}{k-1} \Big) \frac{k-1}{d-k}\\
        &= \liminf_{n\to\infty} \Big( 1 - \Big(\varepsilon^\frac{4}{d-k} \frac{4\pi}{2\alpha-1}\frac{1}{d}\Big)^\frac{d-k}{2(k-1)} \Big) \frac{k-1}{d-k}.
    \end{align*}
    If \(\alpha < 1\), then \(p_n \to \infty\), and the above bound yields
    \[ 
    \liminf_{n\to\infty}x_n \frac{p_n+q_n}{p_n} \geq\liminf_{n\to\infty} \Big( 1 - \Big(\varepsilon^\frac{4}{d-k} \frac{4\pi}{2\alpha-1}\frac{1}{d}\Big)^\frac{d-k}{2(k-1)} \Big) \frac{k-1}{d-k} = (1 - 0) \frac{\alpha}{1-\alpha} > 1,
    \]
    which implies \(I(p_n,q_n;x_n) \to 1\) by Lemma \ref{lem:consequences1} (b).

    Let us now assume that \(\alpha = 1\).
    Note that for any \(c > 0\), we have that
    \[ \varepsilon^\frac{4}{d_n-k_n} \frac{4\pi}{2\alpha-1}\frac{1}{d_n} \leq c \]
    for all but finitely many \(n\), which gives
    \[
    \liminf_{n\to\infty} \Big( 1 - \Big(\varepsilon^\frac{4}{d-k} \frac{4\pi}{2\alpha-1}\frac{1}{d}\Big)^\frac{d-k}{2(k-1)} \Big) \frac{k-1}{d-k} \geq \liminf_{n\to\infty} \Big( 1 - c^\frac{d-k}{2(k-1)} \Big) \frac{k-1}{d-k}.
    \]
    Since \(\frac{d-k}{2(k-1)} \to \frac{1-\alpha}{2\alpha} = 0\), the latter limit is equal to
    $\lim_{t \to 0+} \frac{1 - c^t}{2t} = -\frac{\log c}{2}$.
   
    Since \(c > 0\) was arbitrary, this yields
    \[ \liminf_{n \to \infty} x_n \frac{p_n+q_n}{p_n} \geq \sup_{c>0}- \frac{\log c}{2} = \infty, \]
    which, together with $\liminf_{n\to \infty}p_n \geq 1/2 >0$, implies \(I(p_n,q_n;x_n) \to 1\) by Lemma \ref{lem:consequences1} (a).
\end{proof}

We now focus on admissible sequences with $d_n/k_n \to 1/2$.
The asymptotic behaviour then depends on the growth of $r_n = 2k_n-d_n-1$.
If $r_n$ is sufficiently small (which means that $k_n$ is sufficiently close to $d_n/2$), then $J(d_n,k_n,\varepsilon)$ tends to $0$ for any $\varepsilon > 0$.

\begin{proposition}\label{prop:k/d_to_1/2}
Let $\varepsilon>0$. Let $(d_n,k_n)_{n\geq 1}$  be an  admissible sequence such that $d_n\to\infty$ and  $k_n/d_n\to 1/2$.
Write $r_n := 2k_n-d_n-1$.
Then the following holds:
\begin{enumerate}
     \item[\emph{(a)}]  If $\limsup\limits_{n\to\infty}d_n^{-1} r_n^{d_n/k_n} < e\pi$, then \(J(d_n,k_n,\varepsilon) \to 0\) as $n\to\infty$.
     \item[\emph{(b)}]  If $\liminf\limits_{n\to\infty}d_n^{-1} r_n^{d_n/k_n} > e\pi$, then \(\JJ(d_n,k_n,\varepsilon) \to 1\) as $n\to\infty$.
\end{enumerate}
\end{proposition}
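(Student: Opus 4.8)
The plan is to fix $\varepsilon>0$ and reduce everything to the identity \eqref{eq:J_identity_1}, which reads
$J(d_n,k_n,\varepsilon) = 1 - I\!\left(\tfrac{r_n}{2},\tfrac{d_n-k_n}{2};\,(\sigma_n\varepsilon)^{2/(k_n-1)}\right)$.
Writing $q_n := \tfrac{r_n}{2}$, $p_n := \tfrac{d_n-k_n}{2}$ and $y_n := (\sigma_n\varepsilon)^{2/(k_n-1)}$, the mean of $\mathrm{Beta}(q_n,p_n)$ is $\mu_n = \tfrac{q_n}{q_n+p_n} = \tfrac{r_n}{k_n-1}$, and I would phrase the whole argument through the ratio $\rho_n := y_n/\mu_n = \tfrac{(\sigma_n\varepsilon)^{2/(k_n-1)}(k_n-1)}{r_n}$, to which Lemma~\ref{lem:consequences1} will be applied with first parameter $q_n$. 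Note that $k_n/d_n\to\tfrac12$ forces $r_n = o(d_n)$, hence $p_n\to\infty$, $\mu_n\to 0$, $2(k_n-1) = d_n+r_n-1\sim d_n$, and (since $q_n = r_n/2\ge\tfrac12$ by admissibility) $\liminf_n q_n>0$.

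The core step, and the one I expect to be the main obstacle, is the asymptotic
\[
\rho_n^2\,\cdot\, d_n^{-1}\,r_n^{d_n/k_n}\ \longrightarrow\ e\pi ,
\]
valid for \emph{every} admissible sequence with $k_n/d_n\to\tfrac12$. I would prove it by substituting the closed form $\sigma_n^2 = \pi^{p_n}\Gamma(r_n/2)/\Gamma((k_n-1)/2)$ from \eqref{eq:sigma}, taking $(k_n-1)$-th roots, and applying the two-sided Stirling bounds \eqref{eq:Stirlingapp} to $\Gamma((k_n-1)/2)$ and — when $r_n\ge 2$ — to $\Gamma(r_n/2)$ (the case $r_n=1$ being handled directly via $\Gamma(\tfrac12)=\sqrt\pi$). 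This gives $(\sigma_n^2)^{1/(k_n-1)} = \sqrt{\tfrac{2e\pi}{k_n-1}}\,\exp\!\big(\tfrac{\mu_n}{2}\log\tfrac{r_n}{2e}\big)(1+o(1))$, hence $\rho_n^2 = \tfrac{2e\pi(k_n-1)}{r_n^2}\exp\!\big(\mu_n\log\tfrac{r_n}{2e}\big)(1+o(1))$; on the other hand, using $\tfrac{d_n}{k_n}-2 = -\tfrac{r_n+1}{k_n}$, one has $d_n^{-1}r_n^{d_n/k_n} = \tfrac{r_n^2}{d_n}\exp\!\big(-\tfrac{(r_n+1)\log r_n}{k_n}\big)$. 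Multiplying, the prefactor becomes $\tfrac{2e\pi(k_n-1)}{d_n} = e\pi(1+o(1))$, and the two \emph{a priori} unbounded exponents combine to $\mu_n\log\tfrac{r_n}{2e} - \tfrac{(r_n+1)\log r_n}{k_n} = \tfrac{r_n\log r_n}{k_n(k_n-1)} - \tfrac{(1+\log 2)r_n}{k_n-1} - \tfrac{\log r_n}{k_n}$, each summand of which is $o(1)$. The delicate point is exactly this cancellation: since $r_n$ may be anything from bounded up to $o(d_n)$, one must keep the Stirling remainders and the exponents explicit enough to see that what survives is genuinely $o(1)$.

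Granting the identity, the two parts are short. For (a): $\limsup_n d_n^{-1}r_n^{d_n/k_n}<e\pi$ gives $\liminf_n\rho_n^2 = e\pi/\limsup_n d_n^{-1}r_n^{d_n/k_n}>1$, so $\liminf_n\rho_n>1$. It suffices to show that every subsequence has a further subsequence along which $J(d_n,k_n,\varepsilon)\to 0$; passing to such a subsequence, either $q_n=r_n/2$ stays bounded — then $r_n$ is bounded, $d_n^{-1}r_n^{d_n/k_n}\le d_n^{-1}r_n^2\to 0$, $\rho_n\to\infty$, and Lemma~\ref{lem:consequences1}(a) yields $I(q_n,p_n;y_n)\to 1$ — or $q_n\to\infty$, and Lemma~\ref{lem:consequences1}(b) (using $\liminf_n\rho_n>1$) yields the same. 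Either way $J(d_n,k_n,\varepsilon) = 1-I(q_n,p_n;y_n)\to 0$. For (b): $\liminf_n d_n^{-1}r_n^{d_n/k_n}>e\pi>0$ forces $r_n\to\infty$ (otherwise $d_n^{-1}r_n^{d_n/k_n}\le d_n^{-1}r_n^2\to 0$ along a subsequence), so $q_n\to\infty$; moreover $\limsup_n\rho_n^2 = e\pi/\liminf_n d_n^{-1}r_n^{d_n/k_n}<1$, so $\limsup_n\rho_n<1$, and Lemma~\ref{lem:consequences1}(c) gives $I(q_n,p_n;y_n)\to 0$, i.e.\ $J(d_n,k_n,\varepsilon) = 1-I(q_n,p_n;y_n)\to 1$.
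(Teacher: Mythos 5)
Your proposal is correct and follows essentially the same route as the paper: it reduces to \eqref{eq:J_identity_1}, establishes the key Stirling asymptotic $\rho_n^2\,d_n^{-1}r_n^{d_n/k_n}\to e\pi$ (which is exactly \eqref{eq:later}, your explicit exponent bookkeeping replacing the paper's chain of $\sim$-equivalences with uniform Stirling constants), and then applies Lemma~\ref{lem:consequences1} via the same bounded-versus-divergent $r_n$ dichotomy, made explicit through a subsequence argument where the paper says ``without loss of generality''.
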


\begin{proof}
We fix  \(\varepsilon > 0\) and omit the index \(n\). Then, we notice that, as \(n \to \infty\),
\begin{align*}
\sigma^{\frac{4}{k-1}}
    &= \pi^\frac{d-k}{k-1} \frac{\Gamma\left(\frac{2k-d-1}{2}\right)^{\frac{2}{k-1}}}{\Gamma\left(\frac{k-1}{2}\right)^{\frac{2}{k-1}}}
    \sim \pi^{\frac{\frac{d}{k}-1}{1-\frac{1}{k}}} \frac{\Gamma\left(\frac{2k-d-1}{2}\right)^{\frac{2}{k-1}}}{(4\pi/(k-1))^\frac{1}{k-1}\frac{k-1}{2e}}
    \sim  \frac{2e\pi}{k} \Gamma\left({\textstyle\frac{2k-d-1}{2}}\right)^{\frac{2}{k-1}},
\end{align*}
where we used \eqref{eq:sigma} and \eqref{eq:Stirlingapp}.
While \(r = 2k-d-1\) may not tend to infinity, \eqref{eq:Stirlingapp} still guarantees that
\[ 0 < \inf_{r \in \N} \frac{\Gamma(\frac{r}{2})}{\sqrt{\frac{4\pi}{r}} (\frac{r}{2e})^\frac{r}{2}} \leq \sup_{r \in \N} \frac{\Gamma(\frac{r}{2})}{\sqrt{\frac{4\pi}{r}} (\frac{r}{2e})^\frac{r}{2}} < \infty, \]
which yields
\[ \frac{2e\pi}{k} \Gamma\left({\textstyle\frac{2k-d-1}{2}}\right)^{\frac{2}{k-1}} \sim \frac{2e\pi}{k} \left(\frac{r}{2e}\right)^\frac{r}{k-1} \sim \frac{2e\pi}{k} r^\frac{r}{k-1}, \]
where the last step holds because  \(r/(k-1)\) tends to zero.
Hence,
\begin{align}\label{eq:later}
    \sigma^{\frac{4}{k-1}}\Big(\frac{k-1}{r}\Big)^2
    \sim \frac{2e\pi}{k} r^{\frac{r}{k-1}} \Big(\frac{k}{r}\Big)^2
    = e\pi \frac{2k}{r^\frac{d-1}{k-1}}
    \sim e\pi \frac{d}{r^{\frac{d}{k}}},
\end{align}
where we used
$$ \frac{r^{\frac{d}{k}}}{r^{\frac{d-1}{k-1}}}=\Big(r^{\frac{1}{k-1}}\Big)^{1-\frac{d}{k}}\to 1^{-1}=1 $$
in the last step.
By \eqref{eq:J_identity_1}, we have \(\JJ(d_n,k_n,\varepsilon)=1 - I(p_n,q_n;x_n)\) with $p_n = \frac{r_n}{2}$, $q_n = \frac{d_n-k_n}{2}$ and $x_n = (\sigma_n\varepsilon)^\frac{2}{k_n-1}$.

To prove (a), suppose that  $\limsup_{n\to\infty}(d_n^{-1} r_n^{d_n/k_n}) < e\pi$.
Without loss of generality, we may assume that either the sequence \((r_n)_{n\geq 1}\) is bounded or \(r_n \to \infty\). In the former case, we observe that \(\liminf_{n\to\infty} p_n \geq 1/2\) since \(r_n \geq 1\) and further, using \eqref{eq:later}, 
\[ \Big(x_n\frac{p_n+q_n}{p_n}\Big)^2 
    = \sigma^{\frac{4}{k-1}}\Big(\frac{k-1}{r}\Big)^2
    \sim e\pi \frac{d}{r^{\frac{d}{k}}} \geq e\pi \frac{d}{(\max_n r_n)^2} \to \infty. \]
Lemma \ref{lem:consequences1} (a) thus yields \(\JJ(d_n,k_n,\varepsilon) \to 0\). In the latter case, \(p_n = r_n/2 \to \infty\) and \eqref{eq:later} gives
\[ \liminf_{n \to \infty} \Big(x_n\frac{p_n+q_n}{p_n}\Big)^2 
    = \liminf_{n\to\infty} e\pi  \frac{d}{r^\frac{d}{k}} > 1, \]
hence Lemma \ref{lem:consequences1} (b) yields \(\JJ(d_n,k_n,\varepsilon) \to 0\).
Together with the previous consideration, this proves the first part of the proposition.

For part (b) suppose that $\liminf_{n\to\infty}(d_n^{-1} r_n^{d_n/k_n}) > e\pi$.
Arguing as in the previous case then gives \(\limsup_{n \to \infty} x_n\frac{p_n+q_n}{p_n} < 1\).
Since \(r_n^2 \geq r_n^\frac{d_n}{k_n} \geq e \pi d_n\) for all but finitely many \(n\), we further have \(p_n = r_n/2 \to \infty\).
We can thus apply Lemma \ref{lem:consequences1} (c) to obtain $\JJ(d_n,k_n,\varepsilon) \to 1$.
\end{proof}

Theorem \ref{thm:CLT} now follows directly by combining Proposition \ref{prop:k/d>1/2}, Proposition \ref{prop:k/d_to_1/2} and Corollary \ref{cor:CLT}.

\subsection{Proof of Theorem \ref{thm:rescaling_levy_measure}}

For $k<d$ and $2k>d+1$ let $f_{d,k}$ denote the density of the Lévy measure in \eqref{eq:levy_measure_Z}.
Using \eqref{eq:sigma} and $\omega_{d-k}=2\pi^{\frac{d-k}{2}}/\Gamma(\frac{d-k}{2})$, one obtains for $x \in (0,1)$ that 
\begin{equation}\label{eq:representation_f_normalized}
\frac{f_{d,k}(x)}{\sigma_{d,k}^2} = \frac{2}{k-1} x^{-1-\frac{d-1}{k-1}} (1-x^\frac{2}{k-1})^{\frac{d-k}{2}-1} \frac{\Gamma\left(\frac{2k-d-1}{2}+\frac{d-k}{2}\right)}{\Gamma\left(\frac{2k-d-1}{2}\right)\Gamma\left(\frac{d-k}{2}\right)}. 
\end{equation}
In the following, we consider an admissible sequence $(d_n,k_n)_{n\geq 1}$ as well as random variables
 $\widetilde Z_{n}$, $n\in\N$,  with respective Lévy triplets $(0,0,\nu_{d_n,k_n}/\sigma^2_{d_n,k_n})$.

From now on, we will abbreviate $f_n \coloneqq f_{d_n,k_n}$, $\sigma_n \coloneqq \sigma_{d_n,k_n}$ and $\nu_n\coloneqq\nu_{d_n,k_n}$ to simplify the notation.
Sometimes the index $n$ will be omitted from $d_n$ and $k_n$ for the sake of readability.

Let us first assume that $d_n-k_n = b \in \N$ is constant.
We show that $\widetilde Z_{n}$ converges to $\widetilde Z ^{(b)}$ in distribution.

\smallskip
\emph{Step 1: Pointwise limit of the rescaled densities.}
Let $x \in (0,1)$ be fixed.
Since $k_n \to \infty$ as $n \to \infty$, it follows by l'Hôpital's rule that
$1 - x^\frac{2}{k_n-1} \sim (-2\log x)/(k_n-1)$, as $n \to \infty$. 
Using these observations as well as Lemma \ref{lem:gamma_ratio} (c), we obtain
\[ \frac{f_n(x)}{\sigma^2_n} \sim \frac{2}{k_n-1} x^{-2} \left(-\frac{2 \log x}{k_n-1}\right)^{\frac{b}{2}-1} \frac{\left(\frac{2k_n-d_n-1}{2}\right)^\frac{b}{2}}{\Gamma(\frac{b}{2})}. \]
Since $k_n-1 = d_n -b-1 \sim d_n$ and $2k_n-d_n-1 = d_n-2b-1 \sim d_n$, it follows that
\[ \frac{f_n(x)}{\sigma^2_n} \to \Gamma({\textstyle\frac{b}{2}})^{-1} x^{-2} (-\log x)^{\frac{b-2}{2}}, \quad n \to \infty, \]
so $f_n/\sigma^2_n$ converges pointwise to the density of the Lévy measure $\widetilde\nu^{(b)}$.

\smallskip
\emph{Step 2: Integrable domination.}
Let $g_t(x):=e^{itx}-1-itx$, for $t,x \in \R$.
We construct, for fixed $t \in \R$, an integrable dominating function for
\( |g_t(x)|\, f_n(x)/\sigma^2_n\), \(x\in (0,1)\).
Since $|g_t(x)| \leq t^2 x^2$ by \eqref{eq:bound_taylor}, we get using \eqref{eq:representation_f_normalized} that
\[ |g_t(x)|\frac{f_n(x)}{\sigma^2_n}
\leq \frac{2t^2}{k-1} x^{1-\frac{d-1}{k-1}} (1-x^\frac{2}{k-1})^{\frac{b}{2}-1} \frac{\Gamma\left(\frac{2k-d-1}{2} + \frac{b}{2}\right)}{\Gamma\left(\frac{2k-d-1}{2}\right)\Gamma\left(\frac{b}{2}\right)}.  \]
Let us first assume that $b \geq 2$.
Since $\frac{d_n-1}{k_n-1} = \frac{d_n-1}{d_n-b-1} \to 1$, as $n \to \infty$, it holds for large $n$ that
\[ x^{1-\frac{d_n-1}{k_n-1}} \leq x^{-\frac{1}{2}}, \quad x \in (0,1). \]
From $1 + y \leq e^y$ with $y = (2 \log x)/(k-1)$, we get
\[ 1-x^\frac{2}{k-1} \leq -\frac{2 \log x}{k-1}, \quad x \in (0,1). \]
Using these bounds as well as Lemma \ref{lem:gamma_ratio} (b), we obtain
\[ |g_t(x)|\frac{f_n(x)}{\sigma^2_n}
\leq \frac{2 t^2}{k-1} x^{-\frac{1}{2}} \left(-\frac{2 \log x}{k-1}\right)^{\frac{b}{2}-1} \frac{\left(\frac{k-1}{2}\right)^\frac{b}{2}}{\Gamma(\frac{b}{2})}
= \frac{t^2}{\Gamma(\frac{b}{2})} x^{-\frac{1}{2}} (-\log x)^{\frac{b}{2}-1}, \quad x \in (0,1), \]
for all sufficiently large $n\in\N$.
The integrability of the right-hand side follows after substituting $x=e^{-u}$.

Now assume that $b=1$.
For $x \in (0,1]$ and $\alpha \in [0,1]$, the mean value theorem implies that $\alpha(1-x) \leq 1-x^\alpha$.
Applying this with $\alpha = 2/(k_n-1)$, which is in $[0,1]$ since $k_n \geq 3$, gives
\[ (1-x^\frac{2}{k_n-1})^{-\frac{1}{2}} \leq \Big( \frac{2}{k_n-1}(1-x) \Big)^{-\frac{1}{2}}.  \]
Bounding the other terms of $|g_t(x)|\,f_n(x)/\sigma_n^2$ as above yields
\[ |g_t(x)|\frac{f_n(x)}{\sigma^2_n}
\leq \frac{2 t^2}{k-1} x^{-\frac{1}{2}} \left(\frac{2}{k-1}(1-x)\right)^{-\frac{1}{2}} \frac{\left(\frac{k-1}{2}\right)^\frac{1}{2}}{\Gamma(\frac{1}{2})}
= \frac{t^2}{\Gamma(\frac{1}{2})} x^{-\frac{1}{2}} (1-x)^{-\frac{1}{2}}, \quad x \in (0,1), \]
for all but finitely many $n \in \N$. This function is clearly integrable.

\smallskip
\emph{Step 3: Convergence of exponents and random variables.}
Define
\[
\Psi_n(t) := \frac{1}{\sigma^2_{n}} \int_{(0,1)} g_t(x)\,\nu_{n}(\dint x),
\quad 
\Psi^{(b)}(t) := \int_{(0,1)} g_t(x)\,\widetilde\nu^{(b)}(\dint x),\quad t \in \R,
\]
with $g_t(x)=e^{itx}-1-itx$ as above.
By Step~1, $f_n(x)/\sigma^2_n\to \Gamma(b/2)^{-1} x^{-2}(-\log x)^{(b-2)/2}$ for each $x\in(0,1)$, as $n\to\infty$, and by Step~2, for each fixed $t\in\R$, the integrands $x \mapsto g_t(x) f_n(x) / \sigma^2_n$ are dominated by an integrable function independent of $n$, provided that $n$ is large enough.
Hence, the dominated convergence theorem yields for each $t \in \R$ that $\Psi_n(t) \to \Psi^{(b)}(t)\in (0,\infty)$, as $n\to\infty$.

Let $\widetilde Z^{(b)}$ be an infinitely divisible random variable with Lévy triplet $(0,0,\widetilde\nu^{(b)})$.
(The fact that $\widetilde\nu^{(b)}$ is a valid Lévy measure, i.e., that $\int_\R (1\wedge x^2)\, \widetilde\nu^{(b)}(\dint x) < \infty$, follows from \eqref{eq:cumulants} below.)
It then follows for the characteristic functions of $\widetilde Z_n$ and $\widetilde Z^{(b)}$ that
\[
\varphi_{\widetilde Z_n}(t) = \exp(\Psi_n(t)) \ \to \ \exp(\Psi^{(b)}(t)) = \varphi_{\widetilde Z^{(b)}}(t) \qquad \text{for each } t\in\R,
\]
so $\widetilde Z_n \xrightarrow{D} \widetilde Z^{(b)}$ by Lévy's continuity theorem.

For $m\geq 2$, the $m$-th order cumulant of $\widetilde Z^{(b)}$ is given by
\begin{equation}\label{eq:cumulants}
    \kappa_m\bigl(\widetilde Z^{(b)}\bigr)
    =\int_{(0,1)} x^m\,\widetilde\nu^{(b)}(\mathrm{d}x)
    = \frac{1}{\Gamma(\frac{b}{2})}\int_0^1 x^{m-2}\,(-\log x)^{\frac{b-2}{2}}\,\mathrm{d}x
    =\Big(\frac{1}{m-1}\Big)^{\frac{b}{2}},
\end{equation}
since $\int_0^1 x^{p-1}(-\log x)^\alpha\,\mathrm{d}x=\Gamma(\alpha+1)/p^{\alpha+1}$ 
(substitute $x=e^{-u/p}$). 
In particular, the random variable $\widetilde Z^{(b)}$ has finite moments of all orders and $\V(\widetilde Z^{(b)})=1$. This concludes the proof of the first part of Theorem \ref{thm:rescaling_levy_measure}.

Let us now assume that $d_n-k_n \to \infty$, as $n \to \infty$.
Write $\widetilde \nu_{n} \coloneqq \frac{1}{\sigma^2_n} \nu_n$.
For arbitrary $\varepsilon > 0$, it follows from \eqref{eq:representation_f_normalized} that
\begin{align*}
    \int_{\{|x| > \varepsilon\}} x^2 \,\widetilde\nu_n(\dint x) 
    &= B\left({\textstyle\frac{2k-d-1}{2}},{\textstyle\frac{d-k}{2}}\right)^{-1} \frac{2}{k-1} \int_{\{\varepsilon < x < 1\}} x^{1-\frac{d-1}{k-1}} (1-x^\frac{2}{k-1})^{\frac{d-k}{2}-1} \,\dint x\\
    &= I\left({\textstyle\frac{d-k}{2}},{\textstyle\frac{2k-d-1}{2}};1-\varepsilon^{\frac{2}{k-1}}\right),
\end{align*}
where the second equality follows by adapting the arguments leading to \eqref{eq:J_identity_2}.
We now apply Lemma \ref{lem:consequences1} (c) with $p_n = \frac{d_n-k_n}{2}$, $q_n = \frac{2k_n-d_n-1}{2}$ and $x_n = 1 - \varepsilon^\frac{2}{k_n-1}$.
Note that $p_n \to \infty$ by our assumptions on $d_n-k_n$.
Furthermore, we have
\[ \frac{x_n}{\mu_n} = \frac{k_n-1}{d_n-k_n}(1-\varepsilon^\frac{2}{k_n-1})  \sim -\frac{2}{d_n-k_n}\log \varepsilon  \sim 0, \]
where second step follows from $\lim_{s \to 0+} (1-\varepsilon^s)/s = -\log \varepsilon$ and the last one from $d_n-k_n \to \infty$.
Hence, the conditions of Lemma \ref{lem:consequences1} (c) are met, so
\[ \int_{\{|x|>\varepsilon\}} x^2 \, \widetilde\nu_n(\dint x) \to 0, \quad n \to \infty. \]
Since $\varepsilon > 0$ was arbitrary and $\int_\R x^2 \,\widetilde\nu_n(\dint x) = 1$ by construction, Lemma \ref{lem:condition_CLT_alternative} (a) yields that $\widetilde Z_n \xrightarrow{D} \mathcal{N}(0,1)$.
This concludes the proof of the second part. \hfill\qed

\subsection*{Acknowledgement}
The authors were supported by the German Research Foundation (DFG) via SPP 2265 \textit{Random Geometric Systems}.

\end{document}